\documentclass[11pt]{article}

\usepackage[margin=1in]{geometry}
\usepackage{amsmath,amssymb,amsthm,mathtools}
\usepackage{enumitem}
\usepackage{booktabs}
\usepackage{nicefrac}
\usepackage{array}
\usepackage{microtype}
\usepackage{xcolor}
\usepackage{graphicx}
\usepackage{float}
\usepackage{tikz}
\usetikzlibrary{arrows.meta,positioning,fit}
\usepackage{xurl}
\usepackage{hyperref}
\usepackage[nameinlink,capitalize,noabbrev]{cleveref}
\usepackage{authblk}

\allowdisplaybreaks
\setlist[itemize]{leftmargin=2em,itemsep=0.2em,topsep=0.4em}
\setlist[enumerate]{leftmargin=2.2em,itemsep=0.2em,topsep=0.4em}

\newtheorem{theorem}{Theorem}[section]
\newtheorem{lemma}[theorem]{Lemma}
\newtheorem{proposition}[theorem]{Proposition}
\newtheorem{corollary}[theorem]{Corollary}

\theoremstyle{definition}
\newtheorem{definition}[theorem]{Definition}
\newtheorem{example}[theorem]{Example}
\newtheorem{question}[theorem]{Question}
\theoremstyle{remark}
\newtheorem{remark}[theorem]{Remark}

\newcommand{\E}{\mathbb{E}}
\newcommand{\Prb}{\mathbb{P}}
\newcommand{\R}{\mathbb{R}}
\newcommand{\C}{\mathbb{C}}
\newcommand{\Z}{\mathbb{Z}}
\newcommand{\one}{\mathbf{1}}
\newcommand{\Gammahat}{\widehat{\Gamma}}
\newcommand{\Cay}{\operatorname{Cay}}
\newcommand{\ord}{\operatorname{ord}}

\newcommand{\roundval}{\operatorname{Round}}

\newcommand{\Ree}{\operatorname{Re}}
\newcommand{\eps}{\varepsilon}

\newcommand{\gl}{\operatorname{SDP_{GL}}}

\title{\textbf{Integrality Gap Bounds for the Goemans–Linial SDP on Finite Abelian Cayley Graphs}}

\author{Georgios Stamoulis\thanks{georgios.stamoulis@maastrichtuniversity.nl}}
\affil{Department of Advanced Computing Sciences \\ Maastricht University \\ The Netherlands}
\date{}

\begin{document}
\maketitle

\begin{abstract}
In the uniform sparsest cut problem we are asked to find a vertex set that cuts few edges relative to the number of vertex pairs it separates.  The  Goemans-Linial semidefinite relaxation coupled with the Arora-Rao-Vazirani rounding procedure gives an $\mathcal{O}(\sqrt{\log n})$ approximation ratio on arbitrary graphs on $n$ vertices. We study this relaxation on finite Abelian Cayley graphs and we prove the following results: Firstly we show that when the second normalized Laplacian eigenvalue of $G= \Cay(\Gamma, S)$ is realized by a \emph{Fourier character} the image of which has size \emph{at most four} then $\lambda_2(G) ~=~ \gl (G) ~=~ \psi(G)$, where $\gl(G)$ is the semidefinite optimum and $\psi(G)$ is the uniform sparsest cut optimum.  The reason is geometric: a character maps the vertices onto a regular polygon where the squared chord distance on that polygon satisfies the triangle inequalities exactly when the polygon has at most four vertices.  Grouping vertices with the same character value we produce a small \emph{cyclic quotient} in which an optimal cut can be then found exactly.  As a corollary we get that the relaxation is exact on finite Abelian Cayley graphs over a group of exponent at most four.

Secondly, for arbitrary orders we replace each generator $s$ of $S$  by a uniformly random element of its entire cyclic subgroup (including the identity) of size $r_s=\operatorname{order}(s)$ and  define by $\alpha(r)$ to be the average number of original $+s$ or $-s$ steps that we need in order to simulate a uniformly random move inside a cyclic subgroup of order $r$. Then, we define $\rho(S) = \max ~\alpha(r_s)$ to be its worst case (among all $s$ in $S$). We show that a full cyclic averaging eliminates the phase of each character and by then choosing a nontrivial character $\chi^*$ that minimizes the auxiliary eigenvalue and taking the corresponding kernel cut $\ker \chi^*$ we get a cut which satisfies
\[
    \psi(G)
      ~\leq~ \psi_G(\ker \chi^*)
      ~\leq~ \frac{q^*}{q^*-1} \rho(S) \cdot \gl (G)
      ~\leq~ 2\rho(S) \cdot \gl(G)
\]
where $q^*$ is the number of values taken by the selected character. 

Finally, we also give an explicit infinite family of Abelian Cayley graphs with GL gap exactly $\nicefrac{16}{15}$.  
\end{abstract}

\medskip
\noindent\textbf{Keywords.} Uniform sparsest cut; Goemans-Linial SDP; Abelian Cayley graphs; Fourier characters; quotient rounding; cyclic averaging; negative-type metrics.

\section{Introduction}\label{sec:introduction}


A cut is useful when it separates many pairs of vertices while at the same time  cutting few graph edges. This is well-known as the  \emph{uniform sparsest cut} problem. For a regular graph $G=(V,E)$ and a nonempty set $A\subsetneq V$, we consider the following two experiments:
\begin{enumerate}
  \item we choose uniformly a random directed edge and we ask whether it crosses $A$,
  \item we choose uniformly two independent random vertices and we ask whether they fall on opposite sides of $A$.
\end{enumerate}
The ratio of the first probability to the second is the \emph{uniform sparsity} of $A$. The optimum, which we denote by $\psi(G)$, is the minimum of this ratio over all (nontrivial) cuts $A$.

The uniform sparsest cut problem is a very well studied problem in the approximation algorithm literature. Leighton and Rao in  \cite{LR99} used LPs to obtain an $\mathcal{O}(\log n)$ approximation ratio. Arora, Rao and Vazirani in their seminal work \cite{ARV09} improved the approximation factor to $\mathcal{O}(\sqrt{\log n})$ by using a semidefinite relaxation and a beautiful geometric rounding schema. The SDP relaxation itself is commonly known as the Goemans-Linial SDP and the rounding of that SDP as the ARV rounding.  

The metric formulation of the problem is the following. A cut $A$ determines a natural $0$-$1$ distance
\[
  d_A(x,y)=|\one_{A}(x)-\one_{A}(y)|
\]
where $\one_A(z) = 1$ if $z \in A$ and zero otherwise. The cut objective is the ratio between the average value of $d_A$ on graph edges and its average value on all vertex pairs. In order to obtain a relaxation, Goemans and Linial first considered the $n$-dimensional relaxation $v_x$ of each vertex $x$, and then replaced $d_A$ by a much larger class of distances, the squared Euclidean distances
\[
  d(x,y)=\|v_x-v_y\|_2^2
\]
that also satisfy the triangle inequalities. We will use $\gl(G)$ for the optimum relaxed value of the Goemans-Linial SDP on  $G$ and $\mathrm{ARV}(G)$ for the value obtained after the ARV rounding is applied to that SDP. Each cut metric is a feasible solution for the Goemans-Linial SDP which gives $\gl(G) \leq \psi(G)$. On the other hand, let us consider a feasible squared Euclidean metric and  write $v_x = (f_1(x), \dots, f_m(x))$. Then, the square Euclidean distances decompose as
\[
||v_x - v_y||_2^2 = \sum_{j=1}^m |f_j(x) - f_j(y)|^2.
\]
The variational, or Rayleigh, characterization of $\lambda_2(G)$ of the Laplacian says that no matter which scalar function $f_j$ we choose, the average squared variation across edges is at least $\lambda_2(G)$ times the average squared variation across all pairs of vertices. If we sum this over the coordinates gives us that the same lower bound holds for the vector valued metric. As such, each feasible Goemans-Linial SDP solution must have value at least $\lambda_2(G)$ giving us the fundamental inequalities 
\begin{equation}\label{eq:fundamental-chain-intro}
  \lambda_2(G) ~\leq~ \gl(G) ~\leq~ \psi(G).
\end{equation}

The ARV theorem says that $\psi(G)/\gl(G)=\mathcal{O}(\sqrt{\log n})$ for every $n$-vertex graph with uniform demands. The  worst-case gap in the uniform-demand case remains open: the best known lower bounds are much smaller, with Kane and Meka proving a bound of $\exp\{\Omega(\sqrt{\log\log n})\}$ \cite{KM13}. For the more general problem with arbitrary capacities and demands, the Goemans-Linial gap is now known to be $\Theta(\sqrt{\log n})$ since Naor and Young proved the lower bound \cite{NY17}, and Chang, Naor, and Ren proved the matching upper bound \cite{CNR25}. In this manuscript we consider only uniform demands for which the denominator is the uniform average over all vertex pairs. 

\subsection{Abelian Cayley graphs and a summary of our results}

A Cayley graph $G=\Cay(\Gamma,S)$ is built from a group $\Gamma$ and a multiset $S$ of allowed steps or moves. Its vertices are the group elements, and from $x$ we may move to $x+s$ for $s\in S$. When $\Gamma$ is Abelian, its one-dimensional Fourier characters diagonalize every Cayley random walk and each eigenvector has explicit algebraic form. This class of graphs contains cycles, Cartesian products of cycles, hypercubes, cubelike graphs, and Cayley graphs over vector spaces such as $\mathbb{F}_p^k$. These graphs can have very poor expansion, large eigenspaces, or large degree.
 
In \cite{Trevisan21}, Oveis Gharan and Trevisan showed that for a $d$-regular Cayley Graph $G$ of an Abelian group we have that $\psi(G) ~\leq~ \mathcal O (\sqrt{d}) \cdot\gl (G)$, and it was conjectured that sparsest cut on Cayley graphs may admit a constant factor approximation ratio. One step towards the resolution of this conjecture is the recent work of d'Orsi et al. \cite{dorsiaetall25} in which they provide an approximation \emph{schema} the run time of which depends exponentially on the degree $d$ of $G$.  The cyclic-averaging construction developed here was directly inspired by their scalar-multiple auxiliary graph over $\mathbb{F}_p^k$. We describe the precise relationship and the extensions obtained here in subsection \ref{subsect:relatedwork}.

Now we give a high level description of our results. First some terminology: Let $d$ be a semimetric on $\Gamma$, and let $\chi:\Gamma\to\C$ with $|\chi(x)|=1$ be a non-trivial \emph{Fourier character}. Let us write
\[
N_G(d)=\mathbb{E}_{x\sim\Gamma,\,s\sim S}d(x,x+s),
\qquad
D(d)=\mathbb{E}_{x,y\sim\Gamma}d(x,y),
\]
for its average value on a random directed graph step and on a pair of
independent uniform vertices, respectively. Also, let 
\[
  d_\chi(x,y)=|\chi(x)-\chi(y)|^2.
\]
be the corresponding squared Euclidean distance: it is the squared chord length between two points on the unit circle. The normalized objective ratio is exactly the character eigenvalue:
\[
  \frac{N_G(d_\chi)}{D(d_\chi)}=\lambda_\chi(G).
\]
Note that a character attaining $\lambda_2$ does not necessarily imply that $\gl(G)=\lambda_2(G)$ because squared Euclidean distances do not necessarily satisfy the triangle inequalities i.e., they are not necessarily valid GL (Goemans-Linial) solutions. A character with image of size $q$ traces the regular $q$-gon and its squared chordal distance is a metric only when $q \leq 4$. This leads to the first result of the paper. Low order character images are  feasible GL solutions and they can also be rounded without loss by cutting the cyclic quotient seen by the character. The exactness theorem we prove is actually slightly stronger than a low-exponent statement and can be described by the following:

\begin{theorem}
\label{thm:intro-exact}
Let $G=\Cay(\Gamma,S)$ and suppose that some character $\chi$ realizing $\lambda_2(G)$ has image of size $q \leq 4$. Then 
\[
    \lambda_2(G)=\gl(G)=\psi(G).
\]
If $q \in \{2,3\}$ then the kernel of $\chi$ (the set of group elements that map to the identity element under that character) is an optimal cut. If $q=4$ then one of two half-circle cuts in the cyclic quotient is optimal.
\end{theorem}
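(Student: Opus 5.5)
Since $\lambda_2(G)\le\gl(G)\le\psi(G)$ holds by \eqref{eq:fundamental-chain-intro}, the plan is to exhibit a single cut $A$ with $\psi_G(A)=N_G(d_A)/D(d_A)\le\lambda_\chi(G)=\lambda_2(G)$; this collapses the whole chain. The cut will be read off from the character. Write the image of $\chi$ as the $q$-th roots of unity $\{\omega^j\}_{j=0}^{q-1}$, where $\omega=e^{2\pi i/q}$. Each fibre $\chi^{-1}(\omega^j)$ is a coset of $\ker\chi$, so all fibres have size $|\Gamma|/q$. A directed step $x\mapsto x+s$ moves the label $j$ to $j+k_s$, where $\chi(s)=\omega^{k_s}$. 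The quotient is therefore a weighted Cayley graph on $\Z_q$ with step distribution $k_s$, $s\sim S$. Any cut $A$ that is a union of fibres then has $N_G(d_A)$ and $D(d_A)$ computable purely on $\Z_q$. For such $A$, write $p_k=\Prb_{s\sim S}[k_s=\pm k]$ for $k=1,\dots,\lfloor q/2\rfloor$.

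\emph{Case $q=2$.} Here $d_\chi=4d_{\ker\chi}$, so $\psi_G(\ker\chi)=\lambda_\chi=\lambda_2$ directly.

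\emph{Case $q=3$.} The squared chord distance takes the value $3$ on every pair of distinct points. Hence $d_\chi=3\cdot\one[\chi(x)\ne\chi(y)]$. For $A=\ker\chi$ we compute
\[
N(d_A)=\tfrac{2}{3}p_1,\qquad D(d_A)=\tfrac{4}{9},
\]
while
\[
N(d_\chi)=3p_1,\qquad D(d_\chi)=3\cdot\tfrac{2}{3}=2.
\]
Both ratios equal $\tfrac{3}{2}p_1$, so the kernel is optimal.

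\emph{Case $q=4$.} The chord distances are $2$ for adjacent labels and $4$ for antipodal labels. This gives
\[
N(d_\chi)=2p_1+4p_2,\qquad D(d_\chi)=2.
\]
Take the two half-circle cuts $A_1=\{0,1\}$ and $A_2=\{1,2\}$ (as unions of fibres). A step of $\pm1$ crosses each of them with probability $1/2$, and a step of $\pm2$ always crosses both. Hence
\[
N(d_{A_i})=\tfrac{1}{2}p_1+p_2\quad\text{for }i=1,2,\qquad D(d_{A_i})=\tfrac{1}{2}.
\]
The sparsity of each is therefore $p_1+2p_2$, which equals $\lambda_\chi$. More conceptually, $d_\chi=2(d_{A_1}+d_{A_2})$ is exactly an $\ell_1$ (cut-cone) decomposition, which is why the triangle inequalities hold. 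The general principle is that if $d_\chi=\sum c_i d_{A_i}$ with $c_i\ge0$, then $\min_i N(d_{A_i})/D(d_{A_i})\le N(d_\chi)/D(d_\chi)$ by the mediant inequality. Here both cuts achieve the ratio, so either is optimal. The same principle recovers the $q=2$ and $q=3$ cases: for $q=3$, $d_\chi$ is $3/2$ times the sum of the three singleton-fibre cut metrics, and these are translates of one another, so the kernel suffices.

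The main obstacle is bookkeeping, not an idea. One must check that the mediant/cut-decomposition argument gives $\le$, and hence $\psi\le\lambda_2$, with exact normalizations on $N_G$ and $D$ (directed steps, ordered pairs). One must also confirm that the fibres are equal-size cosets, so that $D$ on the quotient is uniform over $\Z_q$. I will first present the cut-cone decomposition of $d_\chi$ for $q\le4$ as a lemma, and then deduce the theorem from the mediant inequality together with the sandwich \eqref{eq:fundamental-chain-intro}.
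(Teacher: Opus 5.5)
Your proposal is correct and is essentially the paper's argument: the computations $\psi_G(\ker\chi)=\lambda_\chi(G)$ for $q\in\{2,3\}$ and the decomposition $d_\chi=2d_{A_0}+2d_{A_1}$ for $q=4$ are exactly the paper's rounding lemma, which is then combined with the sandwich $\lambda_2(G)\le\gl(G)\le\psi(G)$. The differences are minor: you rightly skip the paper's separate feasibility step for $d_\chi$ (the polygon lemma), since a cut of sparsity at most $\lambda_2(G)$ already collapses the chain, and you note that the two half-circle cuts are translates, so both attain $\lambda_\chi(G)$ exactly rather than only the better one.
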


Since each character image order divides the exponent of $\Gamma$ we get that

\begin{corollary}\label{cor:intro-exp4}
If $\exp(\Gamma)\le4$ then every Cayley graph $G=\Cay(\Gamma,S)$ satisfies $\lambda_2(G)=\gl(G)=\psi(G)$.
\end{corollary}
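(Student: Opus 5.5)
The plan is to reduce the corollary to \cref{thm:intro-exact}. I would show that whenever $\exp(\Gamma)\le 4$, every character has image of size at most four, so in particular the character realizing $\lambda_2(G)$ does.

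First I need that some character realizes $\lambda_2(G)$. For an Abelian group the characters $\chi\in\Gammahat$ form an orthonormal eigenbasis of the normalized Laplacian of $\Cay(\Gamma,S)$, with eigenvalues
\[
\lambda_\chi(G)=\E_{s\sim S}\bigl(1-\Ree\chi(s)\bigr)
\]
(using $S$ symmetric, as the paper's $\pm s$ conventions indicate). The trivial character gives eigenvalue $0$, with eigenvector the constant function. Hence $\lambda_2(G)$ is the minimum of $\lambda_\chi(G)$ over nontrivial $\chi$, and this minimum is attained by some nontrivial $\chi^\ast$. If several characters tie, any one of them will do, since the bound below holds for all characters simultaneously.

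Next, the image $\chi^\ast(\Gamma)$ is a finite subgroup of the unit circle, so it is the cyclic group $\mu_q$ of $q$-th roots of unity for some $q\ge 2$ ($q\ge 2$ because $\chi^\ast$ is nontrivial). Every $x\in\Gamma$ satisfies $\exp(\Gamma)\cdot x=0$, so $\chi^\ast(x)^{\exp(\Gamma)}=1$. Thus every element of $\mu_q$ has order dividing $\exp(\Gamma)$. Since $\mu_q$ contains an element of order exactly $q$, it follows that $q\mid\exp(\Gamma)$, and so $q\le 4$.

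Finally, \cref{thm:intro-exact} now applies to $\chi^\ast$ and gives $\lambda_2(G)=\gl(G)=\psi(G)$. The only delicate point is the first step: confirming that $\lambda_2$ is attained by a character and not merely by some linear combination of characters within a degenerate eigenspace. Because the characters diagonalize the Laplacian, every eigenvalue, $\lambda_2$ included, equals $\lambda_\chi$ for some character $\chi$, which settles this. Connectivity of $G$ plays no role, since the theorem is stated for whatever character attains $\lambda_2$.
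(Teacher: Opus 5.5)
Your proposal is correct and is the paper's proof: it too takes a nontrivial character attaining $\lambda_2(G)$ via \eqref{eq:lambda2-character-min}, notes that its image size divides $\exp(\Gamma)\le 4$, and applies Theorem~\ref{thm:low-order-bottom-character}, the body version of Theorem~\ref{thm:intro-exact}. Your argument that $q\mid\exp(\Gamma)$, via an element of order exactly $q$ in $\mu_q$, just fills in a divisibility step the paper states without proof.
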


The geometric idea is the following exact polygon threshold 
\[
  |\omega^a- \omega^b|^2\text{ satisfies all triangle inequalities on }\Z_q
  \quad\Longleftrightarrow\quad q\le4.
\]
This is the regular-polygon instance of the classical nonobtuse-set criterion that squared Euclidean distance is a metric on a finite point set precisely when every triangle spanned by the set is nonobtuse.

For arbitrary orders we let $r_s=\ord(s)$ and  define by $\alpha(r)$ to be the average number of original $+s$ or $-s$ steps that we need so that we are able to simulate a uniformly random move inside a cyclic subgroup of order $r$. Then, we define $\rho(S)$ to be its worst case (among all $s$ in $S$).

In the cyclically averaged random walk we first choose a move $s\in S$ and then choose a uniformly random  multiple $\ell s\in\langle s\rangle$  (including the identity element) where $\ell$ itself is also uniform in $\{0,1, \dots, r_s-1\}$. We denote this auxiliary graph by $G^\sharp$. The quantity $\rho(S)$ is the  price we have to pay when we transfer the GL objective from the original graph to the auxiliary walk.

\begin{theorem}
\label{thm:intro-general}
For each metric $d$ on $\Gamma$ we have $N_{G^\sharp}(d) ~\leq~ \rho(S) \cdot N_G(d)$. For each character $\chi$ 
\[
  \lambda_\chi(G^\sharp)
  ~=~ \E_{s\sim S}
     \one_{\{\chi(s)\neq 1\}} ~=~ \Pr_{s \sim S}~[\chi(s) \neq 1] 
\]
Let $\chi^*$ be a character that minimizes this expression among all the nontrivial characters and set $q^*=|\chi^*(\Gamma)|$, where $\chi^*(\Gamma) = \{\chi^*(x): x \in \Gamma \}$. Then
\[
  \psi(G)
  ~\leq~ \roundval(\chi^*)
  ~\leq~ \frac{q^*}{q^*-1}\rho(S)\gl(G)
  ~\leq~ 2\rho(S)\gl(G).
\]
\end{theorem}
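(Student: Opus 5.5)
The plan is to prove the three assertions in the order stated, and then chain them through the fundamental inequality \eqref{eq:fundamental-chain-intro} applied to the auxiliary graph $G^\sharp$. Throughout I use that $G^\sharp$ is itself an Abelian Cayley graph, on the multiset of moves $\ell s$ with $s\sim S$ and $\ell\sim\{0,\dots,r_s-1\}$. This multiset is symmetric, because $\ell\mapsto r_s-\ell$ maps $\ell s$ to $-\ell s$. Hence its Laplacian is diagonalized by characters, and $\lambda_2(G^\sharp)=\min_{\chi\neq 1}\lambda_\chi(G^\sharp)$.

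\emph{Step 1: transfer of the edge average.} Fix $s\in S$ with $r=r_s$ and $\ell\in\{0,\dots,r-1\}$. Let $|\ell|_r=\min(\ell,r-\ell)$. The element $\ell s$ is reached from $x$ by $|\ell|_r$ consecutive steps, all $+s$ or all $-s$. The triangle inequality bounds $d(x,x+\ell s)$ by the sum of $d$ over these steps. Averaging over uniform $x\in\Gamma$, each step contributes exactly $\E_x d(x,x+s)$. For a $+s$ step this uses translation invariance of the uniform measure. For a $-s$ step it also uses the substitution $x\mapsto x+s$ together with symmetry of $d$. Hence $\E_x d(x,x+\ell s)\le |\ell|_r\,\E_x d(x,x+s)$. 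Averaging over $\ell$ gives the factor $\alpha(r_s)=\E_\ell|\ell|_{r_s}\le\rho(S)$. Averaging over $s\sim S$ then yields $N_{G^\sharp}(d)\le\rho(S)N_G(d)$.

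\emph{Step 2: the auxiliary eigenvalues.} For a nontrivial character, $D(d_\chi)=\E_{x,y}|\chi(x)-\chi(y)|^2=2$ by orthogonality. Also $N_{G^\sharp}(d_\chi)=\E_s\E_\ell\,2\bigl(1-\Ree\chi(s)^\ell\bigr)$. If $\chi(s)=1$ the inner term vanishes. Otherwise $\chi(s)$ is a nontrivial root of unity whose order divides $r_s$. The geometric sum $\sum_{\ell=0}^{r_s-1}\chi(s)^\ell$ is then $0$, so the inner average equals $2$. This gives $\lambda_\chi(G^\sharp)=\Pr_s[\chi(s)\ne1]$.

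\emph{Step 3: rounding the kernel.} Let $A=\ker\chi^*$ and $q=q^*$, so that $|A|=|\Gamma|/q$. Then $D(d_A)=2\cdot\frac1q\bigl(1-\frac1q\bigr)$. For a fixed $s$, the edge $(x,x+s)$ crosses $A$ only if $\chi^*(s)\ne1$. In that case $x\in A$ forces $x+s\notin A$, and $x+s\in A$ forces $x\notin A$, so the crossing probability is exactly $2/q$. Therefore
\[
\psi_G(A)=\frac{\Pr_s[\chi^*(s)\neq1]\cdot 2/q}{2(q-1)/q^2}=\frac{q}{q-1}\,\lambda_{\chi^*}(G^\sharp)=\frac{q}{q-1}\,\lambda_2(G^\sharp),
\]
where the last equality holds by the choice of $\chi^*$. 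To finish, take an optimal GL solution $d$ for $G$. It is a metric, so Step 1 applies to it. It is also feasible for $G^\sharp$, so \eqref{eq:fundamental-chain-intro} for $G^\sharp$ gives
\[
\lambda_2(G^\sharp)\le\frac{N_{G^\sharp}(d)}{D(d)}\le\rho(S)\,\frac{N_G(d)}{D(d)}=\rho(S)\,\gl(G).
\]
Finally, $q/(q-1)\le2$ because $q\ge2$.

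I expect the main obstacle to be bookkeeping rather than any single idea. One point is the normalization of Step 1 for the $-s$ steps, which needs $d$ symmetric. The other is checking that the same normalization of $N$ and $D$ is used in the character computation, in the cut computation, and in the definition of $\roundval(\chi^*)=\psi_G(\ker\chi^*)$, so that the constants $2$ and $q/(q-1)$ come out exactly.
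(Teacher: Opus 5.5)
Your Steps 1--3 follow the paper's argument essentially line by line: Lemma~\ref{lem:averaged-path} with Proposition~\ref{prop:metric-averaging}, then Lemma~\ref{lem:saturated-spectrum}, then Lemma~\ref{lem:kernel-rounding}, and finally the chain $\lambda_2(G^\sharp)\le\gl(G^\sharp)\le\rho(S)\gl(G)$. They correctly establish $\psi(G)\le\psi_G(\ker\chi^*)\le\frac{q^*}{q^*-1}\rho(S)\gl(G)\le 2\rho(S)\gl(G)$. One small point: your computation of $\lambda_\chi(G^\sharp)$ goes through $N_{G^\sharp}(d_\chi)/D(d_\chi)$ with $D(d_\chi)=2$, so it needs $\chi\neq\mathbf{1}$. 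The trivial character is just the case $0=0$.

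The gap is your identification $\roundval(\chi^*)=\psi_G(\ker\chi^*)$, which is false in general. By Definition~\ref{def:round-value}, $\roundval(\chi^*)=\psi(Q_{\chi^*})$ is the optimum over \emph{all} nonempty proper $B\subsetneq\Z_{q^*}$ in the cyclic quotient $Q_{\chi^*}$. It can be strictly smaller than the kernel value. For $C_4$, every nontrivial character minimizes $\Pr_s[\chi(s)\neq1]$. Taking $\chi^*(x)=i^x$, the kernel has sparsity $4/3$, while the half-circle quotient cut has sparsity $1$. So as written you prove neither $\psi(G)\le\roundval(\chi^*)$ nor $\roundval(\chi^*)\le\psi_G(\ker\chi^*)$.

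The missing ingredient is the exact pullback identity $\psi_G(\chi^{-1}(B))=\psi_{Q_\chi}(B)$ of Lemma~\ref{lem:pullback-identity}. The map $\chi$ is a surjective homomorphism onto the cyclic group $\chi(\Gamma)\cong\Z_q$ with fibers of equal size. Hence $\chi(x)$ is uniform when $x$ is, so the denominator is preserved. Since $\chi(x+s)=\chi(x)\chi(s)$, the step $(x,x+s)$ crosses $\chi^{-1}(B)$ exactly when the quotient step $(u,u+a_s)$ crosses $B$, so the numerator is preserved too. It follows that every quotient cut is an honest cut of $G$ with the same sparsity, which gives $\psi(G)\le\roundval(\chi^*)$. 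The singleton $B=\{0\}$ pulls back to $\ker\chi^*$, which gives $\roundval(\chi^*)\le\psi_G(\ker\chi^*)$. Inserting this step before your final chain completes the proof as in the paper.
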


Here $\roundval(\chi)$ is the optimum cut value in the \emph{cyclic quotient induced by $\chi$}, lifted (or pulled) back to $G$. The theorem above is constructive. In the explicit input model in which the decomposition of $\Gamma$ and the generator multiset are given and $n=|\Gamma|$ is the size parameter, we can simply enumerate all the $n$ characters, then minimize the corresponding formula, and finally output $\ker\chi^*$. This gives a polynomial time $2\rho(S)$-approximation algorithm without ever solving the SDP.

We also compute the exact loss in the metric comparison: we argue that $\alpha(r)$ is optimal even over cut metrics on $\Z_r$. We also show that the image size factor of $q/(q-1)$ is optimal for the  \emph{kernel rounding}. Of course, these two facts by themselves do not necessarily  imply that the integrality gap bound from the above composition is tight because an optimal quotient cut could very well be much better than the kernel and also because the intermediate inequalities may not be tight at the same time.

Cycles and prime-exponent vector spaces illustrate these two cases. First, we consider cycles that satisfy
\[
  \gl(C_n)=\psi(C_n)=
  \begin{cases}
    4/n,&n\text{ even},\\[2pt]
    4n/(n^2-1),&n\text{ odd},
  \end{cases}
\]
but on the other hand $\rho(\{+1/-1\}) = \Theta(n)$. 
Second, for each connected Cayley graph over $\mathbb{F}_p^k$, $p$ prime, we have that 
\[
  \frac{\psi(G)}{\gl(G)}
  ~\leq~ \frac{p}{p-1} \cdot \alpha(p)
  ~=~\frac{p +1}{4},
\]
independently of the degree. 

We note that as an existential integrality-gap bound this dependence on $p$ is not optimal as 
Proposition 3.3 of Austin, Naor, and Valette \cite{DBLP:journals/dcg/AustinNV10} gives an $\mathcal O (\log p)$ bound. On the other hand, the estimate above gives explicit coefficient and a direct character-quotient rounding, which does not require computing an SDP solution or an $L_1$ embedding.


As our final result we show that the Goemans–Linial relaxation is not exact on Abelian Cayley graphs by providing a small 10-vertex graph with integrality gap 16/15. Its Cartesian powers give an infinite connected family with the same gap. For the amplification mechanism we use exact tensorization of the cut optimum  results of Bonsma \cite{bonsmatensor}.

\subsection{Related work}
\label{subsect:relatedwork}
Several ingredients in our paper are well known in the literature. It is standard that Fourier characters diagonalize Abelian Cayley graphs \cite{SteinbergGroups, kantor2015mathematics, Trevisan21}, that squared Hilbert distances are the negative-type objects of Schoenberg \cite{Schoenberg38}, and that triangle inequality comparisons on auxiliary Cayley graphs already appear in the degree argument of Oveis Gharan and Trevisan \cite{Trevisan21}.

The metric approach for the sparsest cut problem was initially proposed by the work of Leighton \& Rao \cite{LR99}, and was followed by the metric-embedding of Linial-London-Rabinovich \cite{LLR95}, and the semidefinite/expander-flow work of Arora-Rao-Vazirani \cite{ARV09}. 
The general and uniform-demand lower bounds discussed above shows that additional graph structure is needed to improve the generic guarantees \cite{KM13,NY17,CNR25}. 

Metric embeddings associated with Abelian Cayley graphs have also been studied from a complementary direction. In  \cite{DBLP:journals/toc/NewmanR09} the authors constructed Cayley graphs on finite Abelian groups whose shortest path metrics require $\Omega (\log |\Gamma|)$ distortion when embedded into Euclidean or negative-type metrics. 
An argument due to Oveis Gharan and Trevisan, presented in detail by Trevisan, shows that a degree-$d$ Abelian Cayley graph has GL integrality gap $\mathcal O(\sqrt d)$ \cite{Trevisan21}. In their proof they use a walk-power auxiliary graph. Our averaging comparison argument borrows the same underlying idea of using an auxiliary graph, but since the parameter of our interest is the \emph{generator} order, and not the degree, we treat each cyclic generator direction separately.


Recent work of d'Orsi, Jones, Ruotolo, Vadhan, and Zhang gives a $(1+\eps)$-approximation schema for degree-$d$ Abelian Cayley graphs running in time $n^{O(1)}\exp\{(d/\eps)^{O(d)}\}$. The approach they use works through eigenspace enumeration and cut improvement \cite{dorsiaetall25}. In the special case where $\Gamma=\mathbb{F}_p^k$ in their Section 8 they form an auxiliary graph by taking all the \emph{nonzero} scalar multiples of every generator and they use it to obtain an $\mathcal{O}(p)$ approximation ratio for sparsest cut. They also note that this approximation guarantee also follows by \cite{DBLP:conf/stoc/KwokLLGT13} but they provide a simpler algorithm and analysis. That auxiliary graph is essentially our cyclic averaging $G^\sharp$ where we additionally include the zero element. The inclusion of the zero element makes each subgroup average an exact Fourier projection and allows us to extend the construction beyond prime-exponent vector spaces by allowing arbitrary generator  orders. 
Note that if $G'$ denotes their non zero-multiple walk and $G^\sharp$ ours then
\[
A_{G^{\sharp}} = \frac{1}{p} \mathbb{I} + \frac{p-1}{p}A_{G'} ~\mbox{ and }~ L_{G^\sharp} = \frac{p-1}{p} L_{G'}
\]
This addition of the zero element removes the factor of $\nicefrac{p}{p-1}$  from the spectrum of the character. Our argument in Section \ref{sec:averaging} also follows the character kernel rounding structure they use within their Section 8.  We also note that in our work, unlike their approximation guarantee, our comparison holds for all feasible metric with respect to the GL SDP and gives a direct bound on the GL integrality gap. D’Orsi et al. do not analyze the GL SDP optimum and their approximation guarantee does not imply an integrality gap bound.

Finally, we  mention that the geometric arguments underlying the polygon threshold are standard, see \cite{DG62} and \cite{DL97}. Our  usege  is to identify when an appropriate Fourier character is a feasible solution of the Goemans-Linial  relaxation and how it can be rounded without loss.


\section{Preliminaries}\label{sec:preliminaries}

In this section we present some basic definitions and facts used later. These are known, but we include them here for completeness and we give pointers to some excellent relevant references. 

\begin{table}[H]
\centering
\small
\renewcommand{\arraystretch}{1.18}
\begin{tabular}{>{\raggedright\arraybackslash}p{0.19\textwidth} >{\raggedright\arraybackslash}p{0.69\textwidth}}
\toprule
\textbf{Symbol} & \textbf{Explanation} \\
\midrule
$\Gamma$, $\Gammahat$ & A finite Abelian group and its group of complex characters. \\
$S$, $G=\Cay(\Gamma,S)$ & A symmetric generator multiset and its normalized Cayley random walk. \\
$\lambda_2(G)$ & The second normalized Laplacian eigenvalue. \\
$N_G(d)$, $D(d)$ & The average metric value on graph steps and on independent uniform vertex pairs. \\
$\psi(G)$, $\gl(G)$ & The optimum uniform sparsest cut value and the optimum Goemans-Linial metric relaxation value. \\
$q(\chi)$, $Q_\chi$ & The size of a character image and the induced cyclic quotient graph. \\
$G^\sharp$ & The cyclically averaged graph . \\
\bottomrule
\end{tabular}
\caption{Notation used throughout the paper. All of them are formally defined below.}
\label{tab:notation}
\end{table}

\subsection{Probability and multigraph conventions}


A \emph{symmetric} multiset $S$ over some group $\Gamma$ contains $s$ and $-s$ with the same multiplicity. We always assume that $S$ is nonempty and that $0 \notin S$ for the original input graph. Quotient graphs may contain projected zero steps, which are self-loops and we include those without any loss because they are part of the random walk normalization.

We follow the common convention that a ``metric'' may assign distance zero to distinct points. Formally, such an object is a \emph{semimetric}: it is nonnegative, symmetric, vanishes on the diagonal, and satisfies the triangle inequalities, but does not necessarily separate points. This convention includes cut metrics.


For $g\in\Gamma$ we denote its \emph{order} by $\ord(g)=\min\{r \geq 1:rg=0\}$ and its \emph{exponent}  by $\exp(\Gamma)=\operatorname{lcm}\{\ord(g):g\in\Gamma\}$. Equivalently, it is the least positive integer $m$ such that $mg=0$ for each $g\in\Gamma$. The standard structure theorem \cite{dummitfoote} gives an isomorphism
\begin{equation}\label{eq:invariant-factors}
  \Gamma\cong \Z_{n_1}\times\cdots\times\Z_{n_k},
  \qquad n_1\mid n_2\mid\cdots\mid n_k.
\end{equation}
In this representation, $\exp(\Gamma)=n_k$.

Uniform measure on $\Gamma$ is translation invariant \cite{analysistopologyhowes}: if we fix an $h\in\Gamma$ and a function $f$ from $\Gamma$ to $\C$ then 
\begin{equation}\label{eq:translation-invariance}
  \E_{x\sim\Gamma}f(x+h)=\E_{x\sim\Gamma}f(x)
\end{equation}
and this elementary fact is the reason that we can average path lengths in a nice way during our the averaging argument.

\subsection{Characters and Fourier analysis}
The books \cite{kantor2015mathematics, Terras99} provide a thorough treatment on Fourier analysis.  A character is a homomorphism $\chi:\Gamma\to\{z\in\C:|z|=1\}$. The set of characters is the dual group $\Gammahat$. The trivial character, equal to $1$ everywhere, is denoted $\mathbf{1}$.

\begin{proposition}[Some basic character facts]\label{prop:character-facts}
For each $\chi\in\Gammahat$ and $g\in\Gamma$:
\begin{enumerate}[label=(\roman*)]
  \item $\chi(0)=1$ and $\chi(-g)=\overline{\chi(g)}$;
  \item if $\ord(g)=r$, then $\chi(g)^r=1$;
  \item $\ker\chi=\{g:\chi(g)=1\}$ is a subgroup;
  \item if $\chi\ne\mathbf{1}$, then $\E_x\chi(x)=0$.
\end{enumerate}
\end{proposition}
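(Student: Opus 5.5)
The four items are elementary consequences of the homomorphism property $\chi(a+b)=\chi(a)\chi(b)$ together with $|\chi(g)|=1$. I would prove them in the listed order, since each later item uses only the earlier ones and the definition.

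For (i), apply the homomorphism property to $0=0+0$ to get $\chi(0)=\chi(0)^2$. Because $|\chi(0)|=1$, we have $\chi(0)\neq0$, so $\chi(0)=1$. Next, $1=\chi(0)=\chi(g+(-g))=\chi(g)\chi(-g)$, so $\chi(-g)=\chi(g)^{-1}$. On the unit circle the inverse of $z$ is $\overline{z}$, since $z\overline{z}=|z|^2=1$; hence $\chi(-g)=\overline{\chi(g)}$.

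For (ii), induction on the homomorphism property gives $\chi(rg)=\chi(g)^r$. If $\ord(g)=r$ then $rg=0$, so by (i) $\chi(g)^r=\chi(0)=1$.

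For (iii), the kernel is nonempty because $0\in\ker\chi$ by (i). If $a,b\in\ker\chi$, then $\chi(a-b)=\chi(a)\overline{\chi(b)}=1$ by (i), so $a-b\in\ker\chi$. By the subgroup criterion, $\ker\chi$ is a subgroup.

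Item (iv) is the only one needing an idea, namely the translation invariance \eqref{eq:translation-invariance}. Since $\chi\neq\mathbf{1}$, pick $h$ with $\chi(h)\neq1$. Writing $\mu=\E_x\chi(x)$, we get
\[
\mu=\E_x\chi(x+h)=\chi(h)\,\E_x\chi(x)=\chi(h)\mu.
\]
Thus $(1-\chi(h))\mu=0$, and because $\chi(h)\neq1$ this forces $\mu=0$. None of these steps presents a real obstacle; the only point needing care is invoking translation invariance correctly in (iv).
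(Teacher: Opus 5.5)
Your proof is correct: (i)--(iii) follow from the homomorphism property and $|\chi(g)|=1$ exactly as you write, and (iv) is the standard argument via the translation invariance \eqref{eq:translation-invariance}. The paper gives no proof of this proposition and treats it as standard background, pointing to the Fourier-analysis texts it cites, so your argument is the textbook route it implicitly relies on.
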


The characters form an orthonormal basis of $L^2(\Gamma)$ under the inner product
$\langle f,g\rangle=\E_{x\sim\Gamma}f(x)\overline{g(x)}$. For completeness we write $\Gamma$ as in \eqref{eq:invariant-factors}. For $a=(a_1,\ldots,a_k)$ with $a_j\in\Z_{n_j}$ we define
\[
  \chi_a(x_1,\ldots,x_k)
  ~=~ \prod_{j=1}^k \exp \left(\frac{2\pi i a_jx_j}{n_j}\right).
\]
There are exactly $|\Gamma|$ such  orthogonal functions.

The image of a character is a finite subgroup of the unit circle and is therefore cyclic. Indeed, if $m$ is the least common multiple of the orders of its elements, then the image lies in the cyclic group of $m$th roots of unity, and every subgroup of a cyclic group is cyclic. We write $q(\chi)=|\chi(\Gamma)|.$
By the first isomorphism theorem,
\begin{equation}\label{eq:kernel-density}
  |\Gamma|=|\ker\chi|\,q(\chi),
  \qquad
  \mu(\ker\chi)=\frac1{q(\chi)}.
\end{equation}
Moreover, $q(\chi)$ divides $\exp(\Gamma)$. Conversely, if $\Gamma$ has exponent $m$ then the projection into the last cyclic factor in \eqref{eq:invariant-factors}, followed by the standard character of $\Z_m$, gives a character with image size $m$.

\subsection{Abelian Cayley graphs and their spectra}

Let $S$ be a finite symmetric multiset in $\Gamma\setminus\{0\}$. The Cayley graph $ G=\Cay(\Gamma,S)$ is the regular multigraph whose normalized random walk moves from $x$ to $x+s$ after sampling an $s\sim S$. Its degree is $|S|$, counted with multiplicity.  The normalized adjacency operator and normalized Laplacian are respectively
\[
  (A_Gf)(x)=\E_{s\sim S}f(x+s),
  \qquad
  L_G=I-A_G.
\]

\begin{proposition}[Character diagonalization]\label{prop:character-diagonalization}
For each character $\chi$ $A_G\chi=a_\chi(G)\chi$ and $a_\chi(G)=\E_{s\sim S}\chi(s)\in\R$.
The corresponding normalized Laplacian eigenvalue is
\begin{equation}\label{eq:character-eigenvalue}
  \lambda_\chi(G)
  =1-a_\chi(G)
  =1-\E_{s\sim S}\Ree\chi(s).
\end{equation}
Consequently,
\begin{equation}\label{eq:lambda2-character-min}
  \lambda_2(G)=\min_{\chi\ne\mathbf{1}}\lambda_\chi(G),
\end{equation}
where eigenvalues are counted with multiplicity, so $\lambda_2(G)=0$ when $G$ is disconnected.
\end{proposition}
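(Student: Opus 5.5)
We verify the eigenvalue identity directly and then read off the spectrum from the fact that the characters are an orthonormal basis of $L^2(\Gamma)$. Fix $\chi\in\Gammahat$ and $x\in\Gamma$. Since $\chi$ is a homomorphism into the unit circle, $\chi(x+s)=\chi(x)\chi(s)$. Hence
\[
(A_G\chi)(x)=\E_{s\sim S}\chi(x+s)=\chi(x)\,\E_{s\sim S}\chi(s),
\]
so $\chi$ is an eigenvector of $A_G$ with eigenvalue $a_\chi(G)=\E_{s\sim S}\chi(s)$.

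To see that this number is real, we use the symmetry of $S$. Because $s$ and $-s$ occur with the same multiplicity, the map $s\mapsto -s$ preserves the uniform measure on $S$. Together with Proposition~\ref{prop:character-facts}(i) this gives
\[
\overline{a_\chi(G)}=\E_s\overline{\chi(s)}=\E_s\chi(-s)=\E_s\chi(s)=a_\chi(G).
\]
Thus $a_\chi(G)$ equals its real part, $\E_s\Ree\chi(s)$. Since $L_G=I-A_G$, the Laplacian eigenvalue is $\lambda_\chi(G)=1-\E_s\Ree\chi(s)$, which is \eqref{eq:character-eigenvalue}.

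For \eqref{eq:lambda2-character-min}, the $|\Gamma|$ characters form an orthonormal basis of $L^2(\Gamma)$, and we have just shown that each is an eigenvector of $L_G$. So $L_G$ is diagonalized in this basis, and its spectrum, counted with multiplicity, is exactly the multiset $\{\lambda_\chi(G):\chi\in\Gammahat\}$. The trivial character gives $\lambda_{\mathbf 1}=1-1=0$. Every eigenvalue is nonnegative, because $\Ree\chi(s)\le 1$ for all $s$. Hence $0$ is the smallest eigenvalue, realized by $\mathbf 1$.

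With multiplicities, the second smallest eigenvalue is therefore the minimum of $\lambda_\chi(G)$ over the remaining characters $\chi\neq\mathbf 1$. If $G$ is disconnected, some nontrivial character has $\lambda_\chi(G)=0$, and then this minimum is $0$. The only point that needs care is that "second eigenvalue" is counted with multiplicity, so removing the single basis vector $\mathbf 1$ is what the indexing requires; nothing here is a real obstacle.
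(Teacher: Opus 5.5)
Your proof is correct and is the standard argument; the paper states this proposition without proof as a known fact, so there is nothing different to compare against. The one step you assert without justification is that a disconnected $G$ has a nontrivial character with $\lambda_\chi(G)=0$. It follows in one line: the component of $0$ is $\langle S\rangle$, so disconnectedness means $\langle S\rangle\neq\Gamma$, and any nontrivial character of $\Gamma/\langle S\rangle$ pulls back to a nontrivial $\chi$ with $\chi(s)=1$ for all $s\in S$.
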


For a real valued function $f$ we let $\bar f=\E_xf(x)$. The usual Rayleigh characterization takes the normalization
\begin{equation}\label{eq:rayleigh-normalization}
  \lambda_2(G)
  =\inf_{f\neq \mathrm{const}}
  \frac{\E_{x,s}(f(x)-f(x+s))^2}
       {\E_{x,y}(f(x)-f(y))^2}.
\end{equation}

\subsection{Uniform sparsest cut}
For a nonempty proper set $A\subsetneq\Gamma$ we define the corresponding edge-crossing probability as
\begin{equation}\label{eq:cut-numerator}
  N_G(A)=\Prb_{x\sim\Gamma,\,s\sim S}
  [\one_A(x)\ne\one_A(x+s)].
\end{equation}
This is the fraction of oriented random-walk transitions that cross the cut. The uniform all-pairs separation probability is
\begin{equation}\label{eq:cut-denominator}
  D(A)=\Prb_{x,y\sim\Gamma}[\one_A(x)\ne\one_A(y)]
  =2\mu(A)(1-\mu(A)).
\end{equation}
The \emph{uniform sparsity} of $A$ and the optimum are
\begin{equation}\label{eq:psi-def}
  \psi_G(A)=\frac{N_G(A)}{D(A)},
  \qquad
  \psi(G)=\min_{\emptyset\ne A\subsetneq\Gamma}\psi_G(A).
\end{equation}
The cut metric associated with $A$ is $d_A(x,y)=|\one_A(x)-\one_A(y)|$. Then
\begin{equation}\label{eq:cut-metric-ratio}
  \psi_G(A)
  =\frac{\E_{x,s}d_A(x,x+s)}{\E_{x,y}d_A(x,y)}.
\end{equation}

\subsection{The Goemans-Linial relaxation}
We call a function $d:\Gamma\times\Gamma\to\R_{\geq 0}$ squared Euclidean semimetric if we can find vectors $v_x$  such that $d(x,y)=\|v_x-v_y\|_2^2$.
We call such a $d$ \emph{feasible} if on top of the previous it also satisfies all triangle inequalities $d(x,z)\leq d(x,y)+d(y,z)$ for all $x,y,z\in\Gamma$.

If $X$ is the Gram matrix $X_{xy}=\langle v_x,v_y\rangle$ then $d(x,y)=X_{xx}+X_{yy}-2X_{xy}$.
The condition $X\succeq0$ is semidefinite and the normalization, the triangle inequalities, and the objective are all linear in $X$ thus the optimization below is a polynomial-size semidefinite program.

\begin{lemma}\label{lem:cut-feasible}
For any nontrivial $A\subsetneq\Gamma$ the cut metric $d_A$ is a feasible solution for $\gl$.
\end{lemma}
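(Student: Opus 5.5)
To prove the lemma I will verify the three defining properties of a feasible solution for $d_A$: it is a squared Euclidean semimetric, it satisfies all triangle inequalities, and, if the program carries a normalization, that normalization can be met.

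\textbf{Squared Euclidean representation.} I take one-dimensional vectors $v_x=\one_A(x)\in\R$. Then
\[
\|v_x-v_y\|_2^2=(\one_A(x)-\one_A(y))^2=|\one_A(x)-\one_A(y)|=d_A(x,y).
\]
The middle equality holds because the difference lies in $\{-1,0,1\}$, and for such values squaring equals taking the absolute value. Equivalently, the Gram matrix $X=\one_A\one_A^{\top}$ is rank one and positive semidefinite.

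\textbf{Triangle inequalities.} I argue by cases on $x,y,z\in\Gamma$. If $d_A(x,z)=0$, the inequality is trivial since $d_A\ge0$. If $d_A(x,z)=1$, then $x$ and $z$ lie on opposite sides of $A$. Whichever side $y$ is on, it differs from at least one of $x$ and $z$, so $d_A(x,y)+d_A(y,z)\ge1$. Alternatively, $d_A$ is the absolute difference of real numbers, which is a metric on $\R$ pulled back along $\one_A$.

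\textbf{Normalization and value.} Since $A$ is nonempty and proper, $D(d_A)=2\mu(A)(1-\mu(A))>0$ by \eqref{eq:cut-denominator}. The objective is a ratio of two functionals that are linear in $d$, so it is invariant under positive scaling. If the SDP normalizes $D(d)=1$, I rescale to $d_A/D(d_A)$. This preserves both the squared Euclidean property (scale the vectors by $D(d_A)^{-1/2}$) and the triangle inequalities. By \eqref{eq:cut-metric-ratio}, the resulting objective value is $\psi_G(A)$.

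There is no real obstacle here. The only point needing care is to match whatever normalization convention the formal definition of $\gl$ uses, and the scale invariance above handles that. As a consequence, minimizing over $A$ gives $\gl(G)\le\psi(G)$.
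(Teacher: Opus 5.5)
Your proof is correct and complete. The one-dimensional embedding $v_x=\one_A(x)$, the case check for the triangle inequality, and the observation $D(d_A)=2\mu(A)(1-\mu(A))>0$ are exactly what the feasibility definition and \eqref{eq:arv-def} require. The paper states this lemma without proof as a standard fact, and your argument is the routine verification it implicitly relies on.
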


If for a nonzero feasible metric we define
\begin{equation}\label{eq:metric-ND}
  N_G(d)=\E_{x\sim\Gamma,\,s\sim S}d(x,x+s),
  \qquad
  D(d)=\E_{x,y\sim\Gamma}d(x,y)
\end{equation}
then the corresponding SDP value is
\begin{equation}\label{eq:arv-def}
  \gl(G)=\inf_{\substack{d\text{ is feasible}\\D(d)>0}}
  \frac{N_G(d)}{D(d)}.
\end{equation}

Finally, we state the fundamental inequality connecting $\lambda_2$, the GL value and the true optimum on a given graph instance $G$:
\begin{equation}\label{eq:fundamental-chain}
  \lambda_2(G) ~\leq~ \gl(G) ~\leq~ \psi(G).
\end{equation}
The basic SDP integrality gap is $\psi(G)/\gl(G)$.

\subsection{Character metrics}
For a character $\chi$ let us define $d_\chi(x,y)=|\chi(x)-\chi(y)|^2$ which is squared Euclidean via the mapping $x\mapsto(\Ree\chi(x),\operatorname{Im}\chi(x))\in\R^2$. Its objective has an exact spectral interpretation.

\begin{lemma}\label{lem:character-metric-ratio}
For every nontrivial character $\chi$, we have that $N_G(d_\chi)=2\lambda_\chi(G)$, $D(d_\chi)=2$ and therefore $\frac{N_G(d_\chi)}{D(d_\chi)} ~=~ \lambda_\chi(G)$.
\end{lemma}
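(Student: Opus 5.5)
The plan is to expand the squared modulus and let the character facts of Proposition~\ref{prop:character-facts} handle the two averages. For any $x,y$ we have
\[
  |\chi(x)-\chi(y)|^2 = |\chi(x)|^2+|\chi(y)|^2-2\Ree\bigl(\chi(x)\overline{\chi(y)}\bigr) = 2-2\Ree\bigl(\chi(x)\overline{\chi(y)}\bigr),
\]
since $|\chi|=1$ everywhere. Each of $N_G(d_\chi)$ and $D(d_\chi)$ then reduces to computing the average of $\chi(x)\overline{\chi(y)}$ over the appropriate distribution of pairs $(x,y)$.

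\textbf{Numerator.} Take $y=x+s$. Multiplicativity and item (i) of Proposition~\ref{prop:character-facts} give
\[
  \chi(x)\overline{\chi(x+s)}=\chi(x)\overline{\chi(x)}\,\overline{\chi(s)}=\overline{\chi(s)}.
\]
This no longer depends on $x$, so averaging over $x\sim\Gamma$ and $s\sim S$ yields
\[
  N_G(d_\chi)=2-2\,\E_{s\sim S}\Ree\overline{\chi(s)}=2-2\,\E_{s\sim S}\Ree\chi(s).
\]
By \eqref{eq:character-eigenvalue} in Proposition~\ref{prop:character-diagonalization}, this equals $2\lambda_\chi(G)$. Symmetry of $S$ is not even needed here, because only the real part enters.

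\textbf{Denominator.} Here $x$ and $y$ are independent and uniform, so the expectation factors:
\[
  \E_{x,y}\chi(x)\overline{\chi(y)}=\bigl(\E_x\chi(x)\bigr)\overline{\bigl(\E_y\chi(y)\bigr)}.
\]
Both factors vanish by item (iv) of Proposition~\ref{prop:character-facts}, since $\chi\neq\mathbf{1}$. Hence $D(d_\chi)=2$.

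Dividing gives the ratio $\lambda_\chi(G)$. No step is a real obstacle; the only care needed is the conjugation bookkeeping and the use of nontriviality of $\chi$ in the denominator.
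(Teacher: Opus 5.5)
Your proof is correct and follows the paper's route. The numerator collapses via $|\chi|=1$ and multiplicativity to $2\bigl(1-\E_s\Ree\chi(s)\bigr)=2\lambda_\chi(G)$, and the denominator uses nontriviality through $\E_x\chi(x)=0$; the only cosmetic difference is that the paper substitutes the uniform difference $u=y-x$, whereas you factor the expectation using independence of $x$ and $y$, which is equally short.
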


\begin{proof}
Because $|\chi(x)|=1$ and $\chi(x+s)=\chi(x)\chi(s)$,
\[
  N_G(d_\chi)
  ~=~ \E_s|1-\chi(s)|^2
  ~=~ 2 \bigl(1-\E_s\Ree\chi(s)\bigr)
  ~=~ 2 \lambda_\chi(G).
\]
For the denominator, if $x,y$ are independent uniform points then $u=y-x$ is uniform and so using the Proposition \ref{prop:character-facts} $D(d_\chi) ~=~ \E_u|1-\chi(u)|^2
  ~=~ 2 \bigl(1-\Ree\E_u\chi(u)\bigr)=2$
\end{proof}

The character metric  certifies $\gl(G)=\lambda_2(G)$ if a minimizing character metric satisfies the triangle inequalities. This is precisely the role of the next section.

\section{Quotient rounding}
\label{sec:quotient}


The fibers of a character $\chi$ are the cosets of $\ker \chi$. Since $\chi(\Gamma)$ is cyclic, we can collapse these fibers and get a Cayley graph on $\mathbb{Z}_q$ where $q=|\chi(\Gamma)|$. A cut on this quotient graph can be lifted cuts that correspond to unions of fibers in the original graph preserving sparsity.

\begin{definition}[Character quotient]\label{def:character-quotient}
Let $\omega = e^{2\pi i/q}$. $\mathbb{Z}_q$ maps to $\chi(\Gamma)$ with $a \mapsto \omega^a$. We define $a_s \in \mathbb{Z}_q$ as $\chi(s) = \omega^a$. The quotient Cayley multigraph associated with $\chi$ is then
\[
  Q_\chi ~=~ \Cay(\Z_q,\{a_s:s\in S\}),
\]
with the same multiplicities as $S$. If $a_s=0$ then the corresponding quotient step corresponds to a self loop and still remains in the random walk distribution.
\end{definition}

For $B\subseteq\Z_q$ we identify $B$ with the corresponding subset of $\chi(\Gamma)$ and define its lift, or pullback: 
\[
  \chi^{-1}(B) ~=~ \big\{x\in\Gamma:~\chi(x)\in B \big\}.
\]

The following elementary lemma shows that quotient rounding incurs no loss i.e., each cut of the cyclic quotient corresponds  back to a cut on the original graph with exactly the same sparsity:

\begin{lemma}[Exact pullback identity]\label{lem:pullback-identity}
For every nonempty proper $B\subsetneq\Z_q$, $\psi_G(\chi^{-1}(B))=\psi_{Q_\chi}(B)$.
\end{lemma}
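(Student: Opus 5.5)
The plan is to push the uniform measure on $\Gamma$ forward along the map $\pi:\Gamma\to\Z_q$, $\pi(x)=a$ where $\chi(x)=\omega^a$, and to check separately that the numerator and the denominator of $\psi$ are preserved. The map $\pi$ is a surjective group homomorphism, since $\chi$ is a homomorphism onto the cyclic group $\chi(\Gamma)\cong\Z_q$. Its fibers are the cosets of $\ker\chi$, all of size $|\Gamma|/q$ by \eqref{eq:kernel-density}. Hence if $x\sim\Gamma$ is uniform, then $\pi(x)$ is uniform on $\Z_q$. Write $A=\chi^{-1}(B)=\pi^{-1}(B)$, so that $\one_A(x)=\one_B(\pi(x))$ for every $x$.

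For the denominator, use \eqref{eq:cut-denominator}. Because $\pi_*$ of the uniform measure is uniform, $\mu(A)=\Prb_x[\pi(x)\in B]=|B|/q=\mu(B)$. Therefore $D(A)=2\mu(A)(1-\mu(A))=2\mu(B)(1-\mu(B))=D(B)$, where the last quantity is computed in $\Z_q$. Since $B$ is nonempty and proper, $A$ is nonempty and proper as well, so both sparsities are defined with $D>0$.

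For the numerator, apply the homomorphism property: $\pi(x+s)=\pi(x)+a_s$, with $a_s$ as in Definition~\ref{def:character-quotient}. This gives
\[
N_G(A)=\Prb_{x,s}[\one_B(\pi(x))\ne\one_B(\pi(x)+a_s)].
\]
Here $x$ and $s$ are independent, and $\pi(x)$ is uniform on $\Z_q$. Moreover, the step $a_s$ with $s\sim S$ is distributed exactly as a uniform sample from the quotient step multiset $\{a_s:s\in S\}$ with multiplicities, including the self-loops $a_s=0$. So the right-hand side is exactly $N_{Q_\chi}(B)$. The quotient multiset is symmetric, because $a_{-s}=-a_s$, so $Q_\chi$ is a legitimate Cayley multigraph in the sense of the preliminaries. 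Dividing the two identities gives $\psi_G(\chi^{-1}(B))=\psi_{Q_\chi}(B)$.

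The only delicate point is bookkeeping rather than a real obstacle. Self-loop steps must be kept in the walk distribution of $Q_\chi$ with their original multiplicities. They contribute $0$ to both $N_G(A)$ and $N_{Q_\chi}(B)$, and they are part of the normalization by $|S|$. This is exactly the convention fixed in the preliminaries, so nothing beyond the uniform pushforward fact needs to be verified.
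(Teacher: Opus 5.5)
Your proposal is correct and follows essentially the same route as the paper. Equal fiber sizes make $\chi(x)$ uniform on $\Z_q$, so $\mu(\chi^{-1}(B))=|B|/q$ and the denominators agree. The identity $\chi(x+s)=\chi(x)\chi(s)$, i.e.\ $u\mapsto u+a_s$, shows that a random step crosses the pullback exactly when the quotient step crosses $B$, so the numerators agree. Your extra bookkeeping (independence of $\pi(x)$ and $a_s$, the symmetry $a_{-s}=-a_s$, and retaining self-loops) only makes explicit what the paper's short proof leaves implicit.
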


\begin{proof}
Every fiber of the surjective homomorphism $\chi:\Gamma\to\chi(\Gamma)$ has size $|\ker\chi|$. Thus $\chi(x)$ is uniform in $\Z_q$ when $x$ is uniform in $\Gamma$, and $\mu_\Gamma(\chi^{-1}(B))=\frac{|B|}{q}$. Additionally
\[
  \chi(x+s)=\chi(x)\chi(s)
  \quad\longleftrightarrow\quad
  u\mapsto u+a_s.
\]
Therefore a random step $(x,x+s)$ crosses the pullback cut exactly when the corresponding quotient step $(u,u+a_s)$ crosses $B$. The numerators and denominators in \eqref{eq:psi-def} remain the same.
\end{proof}

\begin{definition}
\label{def:round-value}
For a nontrivial character $\chi$  the value of the Quotient rounding is $\roundval(\chi) = \psi(Q_\chi)
  = \min_{\emptyset\ne B\subsetneq\Z_q}\psi_{Q_\chi}(B)$.
\end{definition}

A quotient cut corresponds through the pull back into an actual cut in $G$ and that the singleton $\{0\}\subset\Z_q$ similarly corresponds to the $\ker\chi$. This implies the following corollary 

\begin{corollary}
\label{cor:quotient-kernel-chain}
For every nontrivial $\chi$, $\psi(G) \leq \roundval(\chi) \leq \psi_G(\ker\chi)$.
\end{corollary}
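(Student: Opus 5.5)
The first inequality, $\psi(G)\le \roundval(\chi)$, comes straight from the exact pullback identity (Lemma~\ref{lem:pullback-identity}). Since $\chi$ is nontrivial, $q=|\chi(\Gamma)|\ge 2$, so $\Z_q$ has nonempty proper subsets $B$. Fix one that attains $\psi(Q_\chi)$; it exists because $\Z_q$ is finite. Its lift $\chi^{-1}(B)$ is nonempty and proper in $\Gamma$, because $\chi:\Gamma\to\chi(\Gamma)$ is surjective: a point of $B$ has a preimage, and so does a point of $\Z_q\setminus B$. By Lemma~\ref{lem:pullback-identity}, $\psi_G(\chi^{-1}(B))=\psi_{Q_\chi}(B)=\roundval(\chi)$. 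Since $\psi(G)$ is a minimum over all nonempty proper subsets of $\Gamma$, we get $\psi(G)\le\roundval(\chi)$.

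For the second inequality, $\roundval(\chi)\le\psi_G(\ker\chi)$, I will look at the singleton $B=\{0\}\subsetneq\Z_q$. It is nonempty, and it is proper because $q\ge2$. Under the identification $a\mapsto\omega^a$, the point $0$ corresponds to $\chi$-value $1$, so $\chi^{-1}(\{0\})=\{x:\chi(x)=1\}=\ker\chi$. Lemma~\ref{lem:pullback-identity} then gives $\psi_{Q_\chi}(\{0\})=\psi_G(\ker\chi)$. Definition~\ref{def:round-value} takes $\roundval(\chi)$ to be the minimum over all such $B$, so $\roundval(\chi)\le\psi_{Q_\chi}(\{0\})=\psi_G(\ker\chi)$.

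Nothing here is a real obstacle; the argument is routine bookkeeping. The only point to check carefully is nontriviality. We need $q\ge2$ so that $\{0\}$ and an optimal $B$ are genuinely proper, and we need surjectivity onto $\chi(\Gamma)$ so that the lifts are proper in $\Gamma$. Both hold because $\chi\ne\mathbf{1}$ and by~\eqref{eq:kernel-density}.
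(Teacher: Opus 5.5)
Your proposal is correct and follows the paper's argument. The first inequality is the exact pullback identity (Lemma~\ref{lem:pullback-identity}) applied to an optimal quotient cut, and the second comes from lifting the singleton $\{0\}\subset\Z_q$ to $\ker\chi$; your checks that $q\ge 2$ and that lifts of nonempty proper sets stay nonempty and proper fill in details the paper leaves implicit.
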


\section{Exactness from low-order characters}\label{sec:exact}
In this section we show how to combine the lossless lifting principle of the kernel rounding (that simply chooses the single quotient vertex corresponding to the identity the lift of which is $\ker\chi$) with the geometry of the  regular polygon induced by the characters on the unit cycle. For character images of
size at most four  the squared-chord metric is feasible for GL and we show how an appropriate cut of the cyclic quotient has no larger objective value. This implies that the spectral, semidefinite, and cut optima to coincide.

\subsection{Squared chordal distance on regular polygons}

Let $\omega=e^{2\pi i/q}$ where $q = q(\chi)$ as before. The character image of order $q$ carries the \emph{squared chordal semimetric} $d_q(a,b)=|\omega^a-\omega^b|^2$ where $a,b\in\Z_q$.

\begin{lemma}
\label{lem:polygon-cutoff}
For $q \geq 2$ the squared chordal semi-metric $d_q$ satisfies all triangle inequalities if and only if $q\leq4$.
\end{lemma}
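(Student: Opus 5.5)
The plan is to reduce everything to an explicit formula for $d_q$ in terms of the cyclic distance. For $a,b\in\Z_q$ write $k=b-a \bmod q$. Then
\[
  d_q(a,b)=|1-\omega^{k}|^2=2-2\cos\!\left(\tfrac{2\pi k}{q}\right)=4\sin^2\!\left(\tfrac{\pi k}{q}\right),
\]
which depends only on $k$ (and is symmetric under $k\mapsto q-k$). By translation invariance, each triangle inequality $d_q(a,c)\le d_q(a,b)+d_q(b,c)$ reduces to a statement about two steps $j=b-a$ and $k=c-b$. Writing $g(k)=4\sin^2(\pi k/q)$, the inequality becomes
\[
  g(j+k)\le g(j)+g(k)\qquad\text{for all } j,k\in\Z_q .
\]

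For the direction $q\le4$ I would simply tabulate $g$ in the three cases.
\begin{itemize}
  \item For $q=2$, $g$ takes the values $(0,4)$, so $d_2$ is $4$ times the cut metric; it is therefore a feasible semimetric by Lemma~\ref{lem:cut-feasible}, up to scaling.
  \item For $q=3$, $g=(0,3,3)$, so the points form an equilateral triangle with all nonzero distances equal, and the triangle inequality is trivial.
  \item For $q=4$, $g=(0,2,4,2)$. The only nontrivial check is $g(2)\le g(1)+g(1)$, which reads $4\le 2+2$ and holds with equality. Any inequality with a zero term on the right is trivially fine, since then $g(j+k)$ equals the other term.
\end{itemize}
Alternatively, one could invoke the nonobtuse-triangle criterion, but the direct enumeration is cleaner.

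For the direction $q\ge5$ I would exhibit a single violated inequality: $j=k=1$, that is, the three consecutive polygon vertices $0,1,2$. We need
\[
  g(2)>2g(1),\qquad\text{i.e.}\qquad 4\sin^2(2\theta)>8\sin^2\theta,\quad \theta=\pi/q .
\]
Using $\sin 2\theta=2\sin\theta\cos\theta$, this becomes $16\sin^2\theta\cos^2\theta>8\sin^2\theta$, which is equivalent to $\cos^2\theta>1/2$, i.e.\ $\theta<\pi/4$, i.e.\ $q>4$. Geometrically, the angle at vertex $1$ in the triangle $0,1,2$ is $\pi-2\pi/q$, which is obtuse exactly when $q>4$. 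For $q=4$ it is a right angle, which gives the equality observed above.

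There is no real obstacle. The only point needing care is to note that the reduction to consecutive vertices suffices for the ``only if'' direction, while for the ``if'' direction the finite case check must cover all pairs $(j,k)$. The symmetry $g(k)=g(-k)$ together with the cases where $j$, $k$ or $j+k$ is $0$ makes that check immediate.
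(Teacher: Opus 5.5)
Your proposal is correct and follows essentially the same route as the paper. For $q\in\{2,3,4\}$ both arguments check the squared chord lengths directly, where the only tight inequality is $4=2+2$ at $q=4$; for $q\ge5$ both use the three consecutive vertices $0,1,2$ and reduce, via $\sin 2\theta=2\sin\theta\cos\theta$, to $\cos^2(\pi/q)>1/2$. Your translation-invariant reduction to $g(j+k)\le g(j)+g(k)$ only makes the paper's ``each nontrivial triangle'' case check a little more systematic.
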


\begin{proof}
For $q=2$ we only have two points so there is nothing to show. For $q=3$, each nonzero distance is equal and we are done. For $q=4$, adjacent vertices have squared distance $2$ and opposite vertices have squared distance $4$. Each nontrivial triangle has side lengths $2,2,4$, so the only tight inequality is $4 = 2+2$ and again we are done.

Now let us consider the case where $q\geq5$ and let us consider three consecutive vertices, we may name them $0,1,2$. This gives $d_q(0,2)=4\sin^2(2\pi/q)$ and $d_q(0,1)+d_q(1,2)=8\sin^2(\pi/q)$.
The triangle inequality, if it was satisfied, would imply that $  4\sin^2(2\pi/q)\le8\sin^2(\pi/q)$. Using $\sin(2\theta)=2\sin\theta\cos\theta$ and cancelling the positive factor $\sin^2(\pi/q)$ gives $\cos^2(\pi/q) \leq \nicefrac{1}{2}$. But note that $q\geq 5$ implies $\nicefrac{\pi}{q} < \nicefrac{\pi}{4}$ i.e.,  $\cos^2(\pi/q)> \nicefrac{1}{2}$ which concludes the proof.
\end{proof}

\begin{remark}
\label{rem:nonobtuse}
For arbitrary points $u,v,w$ in Euclidean space we know that
\[
  \|u-w\|_2^2\le\|u-v\|_2^2+\|v-w\|_2^2
  \quad\Longleftrightarrow\quad
  (u-v)\cdot(w-v)\ge0.
\]
Thus squared Euclidean distance satisfies all triangle inequalities on a finite set exactly when every triangle spanned by the set is nonobtuse. Danzer and Gr\"unbaum proved that  nonobtuse sets in $\R^d$ has at most $2^d$ points \cite{DG62} which in the plane gives us the same upper threshold of four. In the direct trigonometric proof we use above we use the actual regular polygon obstruction that will be used later on.
\end{remark}

\begin{figure}[H]
\centering
\begin{tikzpicture}[scale=0.95, every node/.style={font=\small}]
  \begin{scope}[xshift=0cm]
    \foreach \j in {0,1,2}{\coordinate (t\j) at ({90+120*\j}:1.15);}
    \draw[thick] (t0)--(t1)--(t2)--cycle;
    \foreach \j in {0,1,2}{\fill (t\j) circle (1.7pt);}
    \node at (0,-1.55) {$q=3$: acute};
  \end{scope}
  \begin{scope}[xshift=4cm]
    \foreach \j in {0,1,2,3}{\coordinate (s\j) at ({45+90*\j}:1.15);}
    \draw[thick] (s0)--(s1)--(s2)--(s3)--cycle;
    \draw[dashed] (s0)--(s2);
    \foreach \j in {0,1,2,3}{\fill (s\j) circle (1.7pt);}
    \node at (0,-1.55) {$q=4$: right-angle equality};
  \end{scope}
  \begin{scope}[xshift=8.4cm]
    \foreach \j in {0,1,2,3,4}{\coordinate (p\j) at ({90+72*\j}:1.15);}
    \draw[thick] (p0)--(p1)--(p2)--(p3)--(p4)--cycle;
    \draw[very thick, red!65!black] (p0)--(p1)--(p2);
    \draw[very thick, red!65!black] (p0)--(p2);
    \foreach \j in {0,1,2,3,4}{\fill (p\j) circle (1.7pt);}
    \node at (0,-1.55) {$q=5$: an obtuse triple};
  \end{scope}
\end{tikzpicture}
\caption{The geometric threshold behind exactness. On a regular triangle or square, squared chordal distances satisfy the triangle inequalities. Starting with the pentagon, three consecutive vertices violate the squared-distance triangle inequality.}
\label{fig:polygon-cutoff}
\end{figure}

The image of the character $\chi$ is a regular $q(\chi)$-gon. The metric is squared Euclidean by construction and satisfies the triangle inequalities by Lemma \ref{lem:polygon-cutoff}. This gives us the following immediate corollary: 

\begin{corollary}\label{cor:low-q-feasible}
If $q(\chi)\leq 4$, then $d_\chi$ is feasible for GL.
\end{corollary}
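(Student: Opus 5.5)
The corollary needs two facts: $d_\chi$ is a squared Euclidean semimetric, and it satisfies every triangle inequality on $\Gamma\times\Gamma\times\Gamma$. A nondegeneracy check ($D(d_\chi)>0$) is also needed for $d_\chi$ to be an admissible point in the infimum \eqref{eq:arv-def}.

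The squared Euclidean part comes directly from the representation $x\mapsto(\Ree\chi(x),\operatorname{Im}\chi(x))\in\R^2$ noted before Lemma \ref{lem:character-metric-ratio}. Nonnegativity, symmetry and vanishing on the diagonal follow at once.

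For the triangle inequalities I will transfer the problem to the polygon. Set $q=q(\chi)$. Since $\chi(\Gamma)$ is the cyclic group of $q$th roots of unity, each $x\in\Gamma$ has a well-defined label $a(x)\in\Z_q$ with $\chi(x)=\omega^{a(x)}$. Then, by definition,
\[
d_\chi(x,y)=|\omega^{a(x)}-\omega^{a(y)}|^2=d_q\bigl(a(x),a(y)\bigr).
\]
So any triple $x,y,z\in\Gamma$ gives a triple $a(x),a(y),a(z)\in\Z_q$, and the required inequality $d_\chi(x,z)\le d_\chi(x,y)+d_\chi(y,z)$ is exactly a triangle inequality for $d_q$.

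The only point needing care is that labels may coincide, i.e.\ the triple may be degenerate in $\Z_q$. If two labels are equal, the inequality reduces to $0\le$ something nonnegative, or to an equality, so it holds trivially. Lemma \ref{lem:polygon-cutoff} covers all triples in any case, so no case split is really needed. If $q=1$ the character is trivial; the corollary is only invoked for nontrivial $\chi$, so $2\le q\le4$ and Lemma \ref{lem:polygon-cutoff} applies.

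Finally, for a nontrivial $\chi$, Lemma \ref{lem:character-metric-ratio} gives $D(d_\chi)=2>0$, so $d_\chi$ is a genuine feasible point of \eqref{eq:arv-def}. I expect no real obstacle. The only subtlety is making sure the pullback through $a(\cdot)$ is well defined, which holds because $\chi(\Gamma)$ is exactly the group of $q$th roots of unity.
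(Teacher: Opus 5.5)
Your proposal is correct and follows the paper's own route. You get the squared-Euclidean structure from the planar embedding $x\mapsto(\Ree\chi(x),\operatorname{Im}\chi(x))$, and the triangle inequalities by pulling back to the regular $q$-gon and invoking Lemma~\ref{lem:polygon-cutoff}. Your extra remarks (well-definedness of the labels $a(x)$, degenerate triples, and $D(d_\chi)=2>0$ for nontrivial $\chi$) are harmless refinements that the paper leaves implicit.
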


The threshold also gives a particularly nice group characterization.

\begin{proposition}
\label{prop:universal-character-feasibility}
Each character metric on $\Gamma$ is feasible for GL if and only if $\exp(\Gamma)\leq 4$.
\end{proposition}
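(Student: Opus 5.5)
The plan is to prove the two directions separately, using the fact recalled in the preliminaries that character image sizes $q(\chi)$ always divide $\exp(\Gamma)$, and that some character attains image size exactly $\exp(\Gamma)$.

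For the ``if'' direction, suppose $\exp(\Gamma)\le 4$ and let $\chi$ be any character. Its image is a cyclic subgroup of the unit circle of order $q(\chi)$, and $q(\chi)$ divides $\exp(\Gamma)$, so $q(\chi)\le 4$. If $\chi$ is trivial, then $d_\chi\equiv 0$. This satisfies every triangle inequality trivially and is squared Euclidean, so it is feasible in the sense of the definition; the normalization $D(d)>0$ is only imposed in the infimum, not in feasibility. If $\chi$ is nontrivial, then $q(\chi)\in\{2,3,4\}$ and Corollary~\ref{cor:low-q-feasible} applies. More precisely, $d_\chi(x,y)=d_q(a,b)$ where $\chi(x)=\omega^a$ and $\chi(y)=\omega^b$. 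Any triangle inequality for $x,y,z\in\Gamma$ therefore pulls back from one on $\Z_q$, which holds by Lemma~\ref{lem:polygon-cutoff}.

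For the ``only if'' direction, I argue by contrapositive. Suppose $m=\exp(\Gamma)\ge 5$. Using the invariant factor decomposition \eqref{eq:invariant-factors}, take the character $\chi(x)=\exp(2\pi i x_k/m)$, which projects to the last factor $\Z_{n_k}=\Z_m$. It is surjective onto the $m$-th roots of unity, so $q(\chi)=m\ge5$. Now pick $x_0=0$, $x_1=e_k$ and $x_2=2e_k$, where $e_k$ is the generator of the last factor. These map to three consecutive polygon vertices $1,\omega,\omega^2$. The computation in Lemma~\ref{lem:polygon-cutoff} then shows
\[
d_\chi(x_0,x_2)=4\sin^2(2\pi/m)>8\sin^2(\pi/m)=d_\chi(x_0,x_1)+d_\chi(x_1,x_2).
\]
So $d_\chi$ violates a triangle inequality and is not feasible.

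The only real subtlety is in the ``only if'' step. One must ensure that the violating triple of polygon vertices is actually realized by group elements, not merely that the image has size $\ge5$. Surjectivity of $\chi$ onto its image settles this, since preimages of $1,\omega,\omega^2$ exist; the explicit choice $0,e_k,2e_k$ makes it concrete. A secondary point is the convention for the trivial character in the ``if'' direction, which is handled by the remark above.
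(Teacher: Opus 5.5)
Your proof is correct and follows the paper's argument: for one direction, $q(\chi)$ divides $\exp(\Gamma)$, so Corollary~\ref{cor:low-q-feasible} applies; for the other, the last-invariant-factor character has image size $m\ge 5$ and violates the three-consecutive-vertex inequality of Lemma~\ref{lem:polygon-cutoff}. Your explicit realization of the violating triple by $0,e_k,2e_k$ and your separate treatment of the trivial character are details the paper leaves implicit.
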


\begin{proof}
If $\exp(\Gamma)\leq 4$ then we have that each character image size divides the exponent and is at most four and so Corollary \ref{cor:low-q-feasible} applies. 
For the opposite direction we note that if $\exp(\Gamma) = m\geq 5$ then the invariant-factor decomposition gives a character with image size $m$. Its regular-polygon metric violates a triangle inequality by Lemma \ref{lem:polygon-cutoff}.
\end{proof}

\subsection{Lossless rounding for images of size two, three, and four}
We now show that for $q \leq 4$ the character metric can also be rounded without loss:

\begin{lemma}
\label{lem:low-order-rounding}
Let $\chi\neq \mathbf{1}$ and let $q=q(\chi)\leq4$. Then $  \roundval(\chi)\le\lambda_\chi(G)$.
In particular (i) if $q=2$ or $q=3$ then $\psi_G(\ker\chi)=\lambda_\chi(G)$ and (ii) if $q=4$ then the best of the two half-circle quotient cuts has sparsity at most $\lambda_\chi(G)$.
\end{lemma}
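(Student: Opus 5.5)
The plan is to work entirely in the cyclic quotient $Q_\chi=\Cay(\Z_q,\{a_s:s\in S\})$ of Definition~\ref{def:character-quotient}. By Lemma~\ref{lem:pullback-identity} every cut $B\subsetneq\Z_q$ has $\psi_G(\chi^{-1}(B))=\psi_{Q_\chi}(B)$, and by Definition~\ref{def:round-value} $\roundval(\chi)\le\psi_{Q_\chi}(B)$ for every such $B$. So it suffices to exhibit, for each $q\in\{2,3,4\}$, one explicit quotient cut $B$ with $\psi_{Q_\chi}(B)\le\lambda_\chi(G)$. In fact we will get equality. Write the eigenvalue in quotient terms via \eqref{eq:character-eigenvalue} as $\lambda_\chi(G)=1-\E_{s}\cos(2\pi a_s/q)$, and compute $N$ and $D$ for the chosen cut directly from \eqref{eq:cut-numerator} and \eqref{eq:cut-denominator}. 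The key observation is that in $\Z_q$ a step $a$ moves a uniform point $u$ across $B$ with a probability depending only on $a$ and $B$.

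\textbf{Case $q=2$.} Take $B=\{0\}$, whose lift is $\ker\chi$. Let $p=\Pr_s[a_s=1]$. Then $D(B)=1/2$, and a step crosses exactly when $a_s=1$, so $N=p$ and $\psi=2p$. On the other side, $\lambda_\chi=1-((1-p)-p)=2p$, so the two agree.

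\textbf{Case $q=3$.} Again take $B=\{0\}$, with lift $\ker\chi$, and let $p=\Pr_s[a_s\ne0]$. Then $D=2\cdot\frac13\cdot\frac23=\frac49$. A nonzero step crosses $B$ exactly when $u=0$ or $u+a_s=0$, which has probability $2/3$; hence $N=\frac23p$ and $\psi=\frac32p$. Since $\cos(2\pi/3)=-\frac12$, we get $\lambda_\chi=1-(1-p)+\frac p2=\frac32p$, again equal. This proves (i) for both $q=2$ and $q=3$, since the kernel is exactly the lift of $\{0\}$.

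\textbf{Case $q=4$.} Let $p_1=\Pr[a_s\in\{1,3\}]$ and $p_2=\Pr[a_s=2]$. Then $\lambda_\chi=1-(1-p_1-p_2)-0\cdot p_1+p_2=p_1+2p_2$. Take a half-circle cut such as $B=\{0,1\}$, which has $D=\frac12$. A step of $\pm1$ crosses from exactly two of the four positions, so with probability $\frac12$. A step of $2$ always crosses. Hence $N=\frac{p_1}2+p_2$ and $\psi_{Q_\chi}(B)=p_1+2p_2=\lambda_\chi$. The same computation applies to the other half-circle cut $\{1,2\}$, so the better of the two has sparsity at most, indeed equal to, $\lambda_\chi(G)$. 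This gives (ii).

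The only delicate point is bookkeeping. The quotient includes self-loops ($a_s=0$), which must be counted in the walk normalization and never cross. For $q=4$ one must also check that a $\pm1$ step crosses the half-circle with probability exactly $\frac12$ regardless of sign. Once these are handled, the inequality $\roundval(\chi)\le\lambda_\chi(G)$ follows in every case from Definition~\ref{def:round-value}.
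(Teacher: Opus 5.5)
Your proof is correct. For $q\in\{2,3\}$ it is the paper's kernel computation: the paper counts crossings of $\ker\chi$ directly in $G$ rather than of $\{0\}$ in $Q_\chi$, but the numbers $N=\beta$, $D=\tfrac12$ and $N=\tfrac23\beta$, $D=\tfrac49$ are identical.

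For $q=4$ you take a different route. You compute the crossing probability of the half-circle cut $\{0,1\}$ step by step: $0$ for loops, $\tfrac12$ for $a_s=\pm1$, and $1$ for $a_s=2$. You then match $\psi_{Q_\chi}(\{0,1\})=p_1+2p_2$ against $\lambda_\chi(G)$. The paper never computes either cut's sparsity. It uses the pointwise identity $d_\chi=2d_{A_0}+2d_{A_1}$ on $\Gamma\times\Gamma$, since equal phases are separated by neither cut, adjacent phases by exactly one, and opposite phases by both. Linearity of $N_G$ and $D$ together with Lemma~\ref{lem:character-metric-ratio} then gives $\lambda_\chi(G)=\tfrac12\bigl(\psi_G(A_0)+\psi_G(A_1)\bigr)$, so the better cut is at most the average.

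What your version buys is the sharper statement that \emph{each} half-circle cut has sparsity exactly $\lambda_\chi(G)$. This is consistent with the paper: $\{1,2\}$ is a translate of $\{0,1\}$ in the Cayley graph $Q_\chi$, so the two terms in the paper's average are in fact equal.

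What the paper's version buys is conceptual. It shows that the square's squared-chord metric lies in the cone of cut metrics, which is the standard $\ell_1$ reason rounding is lossless. The same identity also certifies, for $q=4$, the triangle inequalities that Lemma~\ref{lem:polygon-cutoff} checks by hand.
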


\begin{proof}
Let $\beta=\Prb_{s\sim S}[\chi(s)\neq 1]$. If $q=2$ the only non-trivial phase is the $-1$. Each step with $\chi(s)\neq 1$ crosses the kernel and every step with $\chi(s)=1$ stays inside a kernel coset. Since $\mu(\ker\chi)=1/2$ we have $N_G(\ker\chi)=\beta$,  and $D(\ker\chi)=\nicefrac{1}{2}$, so $\psi_G(\ker\chi)=2\beta$. Also $\lambda_\chi(G)=1-[(1-\beta)\cdot1+\beta\cdot(-1)]=2\beta$.

If $q=3$, every nontrivial phase has real part $-1/2$. A nonkernel step translates the quotient by a nonzero residue. It crosses the singleton kernel exactly when its starting or ending phase is $0$, two disjoint events of probability $1/3$. Hence $N_G(\ker\chi)=\nicefrac{2}{3} \cdot \beta$,  $  D(\ker\chi)=2\cdot\frac{1}{3}\cdot\frac{2}{3}=\frac{4}{9}$ and $\psi_G(\ker\chi)=\frac{3}{2}\beta$.  The eigenvalue is the same: $\lambda_\chi(G)
  =1-[(1-\beta)\cdot1+\beta\cdot(-1/2)]
  =\nicefrac{3}{2}\beta$ .

Finally suppose $q=4$. We may label the phases $0,1,2,3$ around the square and we let $B_0=\{0,1\},   B_1=\{1,2\}$ and define $A_i=\chi^{-1}(B_i)$, then we get that on the square equal phases are separated by neither cut, adjacent phases are separated by exactly one cut, and finally opposite phases are separated by both which tells us that pointwise on $\Gamma\times\Gamma$ we have that $d_\chi=2d_{A_0}+2d_{A_1}$. Both of these cuts have density $\nicefrac{1}{2}$ and so $D(d_{A_0})=D(d_{A_1})=\nicefrac{1}{2}$. Using the Lemma \ref{lem:character-metric-ratio}
\[
  \lambda_\chi(G)
  ~=~ \frac{N_G(d_\chi)}{D(d_\chi)}
  ~=~ N_G(d_{A_0})+N_G(d_{A_1})
  ~=~ \frac{\psi_G(A_0)+\psi_G(A_1)}{2}
\]
and at least one of the two cuts has sparsity at most $\lambda_\chi(G)$.
\end{proof}

\begin{theorem}\label{thm:low-order-bottom-character}
Let $G=\Cay(\Gamma,S)$. Suppose there exists a nontrivial character $\chi$ such that $\lambda_\chi(G)=\lambda_2(G)$ and $q(\chi)\leq 4$.
Then
\[
  \lambda_2(G) ~=~ \gl(G) ~=~ \psi(G).
\]
Moreover, the optimal cut is explicitly obtained from $\chi$ as in Lemma \ref{lem:low-order-rounding}.
\end{theorem}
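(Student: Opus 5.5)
The plan is to sandwich everything between $\lambda_2(G)$ from below and a cut value from above, using the chain \eqref{eq:fundamental-chain} together with the quotient rounding of Section~\ref{sec:quotient}. The chain already gives $\lambda_2(G)\le\gl(G)\le\psi(G)$. So it suffices to exhibit a single cut $A\subsetneq\Gamma$ with $\psi_G(A)\le\lambda_2(G)$; all three quantities then collapse to a common value.

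For the upper bound, fix the character $\chi$ from the hypothesis, with $\lambda_\chi(G)=\lambda_2(G)$ and $q=q(\chi)\le4$. Since $\chi\neq\mathbf{1}$ we have $q\ge2$, so Lemma~\ref{lem:low-order-rounding} applies and gives $\roundval(\chi)\le\lambda_\chi(G)$. By Corollary~\ref{cor:quotient-kernel-chain}, $\psi(G)\le\roundval(\chi)$. Combining,
\[
\psi(G)\le\roundval(\chi)\le\lambda_\chi(G)=\lambda_2(G)\le\gl(G)\le\psi(G),
\]
so all the inequalities are equalities.

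For the ``moreover'' clause, I will read off the cut from the case split in Lemma~\ref{lem:low-order-rounding}. When $q\in\{2,3\}$, the lemma gives $\psi_G(\ker\chi)=\lambda_\chi(G)=\lambda_2(G)=\psi(G)$, so $\ker\chi$ is optimal. When $q=4$, one of the pullbacks $A_0,A_1$ of the half-circle cuts has sparsity at most $\lambda_\chi(G)=\psi(G)$, and is therefore optimal. As an independent check on the middle equality, Corollary~\ref{cor:low-q-feasible} with Lemma~\ref{lem:character-metric-ratio} shows directly that $d_\chi$ is a feasible GL solution of value $\lambda_2(G)$, so $\gl(G)\le\lambda_2(G)$ even without the rounding step.

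I expect no real obstacle, since every ingredient is an earlier result. The only points to watch are that $\chi$ is nontrivial, so that $q\ge2$ and the lemmas apply, and the disconnected case where $\lambda_2(G)=0$. In that case $\chi$ is constant on $S$, so $\ker\chi$ (or a half-circle pullback) cuts no edges and has sparsity $0$, and the same chain of inequalities goes through unchanged.
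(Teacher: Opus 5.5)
Your proof is correct and follows the paper's argument: both combine Lemma~\ref{lem:low-order-rounding} with the chain \eqref{eq:fundamental-chain} to force $\psi(G)\le\lambda_2(G)\le\gl(G)\le\psi(G)$. The paper also uses the feasibility of $d_\chi$ (Corollary~\ref{cor:low-q-feasible}) to obtain $\gl(G)\le\lambda_2(G)$ first; as you observe, this step is redundant once the rounding bound is available.
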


\begin{proof}
Using Corollary \ref{cor:low-q-feasible} and Lemma \ref{lem:character-metric-ratio} we have that $d_\chi$ is a feasible metric of objective value $\lambda_\chi(G)=\lambda_2(G)$ i.e., 
$\gl(G)\leq\lambda_2(G)$.  
Together with the spectral lower bound this gives us $\gl(G)=\lambda_2(G)$. By Lemma \ref{lem:low-order-rounding} we get that a quotient cut has sparsity at most $\lambda_\chi(G)=\lambda_2(G)$ and  combining with \eqref{eq:fundamental-chain} forces equality everywhere.
\end{proof}

\begin{corollary}[Exactness for exponent at most four]\label{cor:exp-four-exactness}
If $\exp(\Gamma)\leq 4$, then every Cayley graph $G=\Cay(\Gamma,S)$ satisfies $\lambda_2(G)=\gl(G)=\psi(G)$.
\end{corollary}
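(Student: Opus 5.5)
The plan is to reduce the corollary to Theorem \ref{thm:low-order-bottom-character}. That theorem needs a nontrivial character that attains $\lambda_2(G)$ and has image of size at most four, so the work is to produce one.

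First I would dispose of the trivial group. If $|\Gamma|=1$ there is no nontrivial cut, so I assume $|\Gamma|\geq 2$. Then $\Gammahat$ has $|\Gamma|\geq 2$ elements, so nontrivial characters exist.

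Next I invoke the character diagonalization, Proposition \ref{prop:character-diagonalization}. By \eqref{eq:lambda2-character-min}, $\lambda_2(G)=\min_{\chi\neq\mathbf{1}}\lambda_\chi(G)$. The minimum is over a finite nonempty set, so it is attained by some nontrivial character $\chi$. This covers the disconnected case too: there $\lambda_2(G)=0$ and the minimizer is simply a nontrivial character with $\lambda_\chi(G)=0$.

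The only remaining hypothesis is $q(\chi)\leq 4$. The paper notes that $q(\chi)$ divides $\exp(\Gamma)$: the image $\chi(\Gamma)$ is cyclic, and every element of it satisfies $z^{\exp(\Gamma)}=1$. Hence $q(\chi)\leq\exp(\Gamma)\leq 4$.

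Theorem \ref{thm:low-order-bottom-character} now gives $\lambda_2(G)=\gl(G)=\psi(G)$ directly. The only care needed is with degenerate cases (the trivial group, and disconnected $G$). For the disconnected case I would briefly check that Lemma \ref{lem:low-order-rounding} still applies when $\lambda_\chi(G)=0$. In that case $\chi(s)=1$ for all $s$, so $\ker\chi$ has zero crossing edges, which is consistent with $\psi(G)=0$.
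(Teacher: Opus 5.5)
Your proposal is correct and follows the paper's proof. Like the paper, you take a nontrivial character attaining $\lambda_2(G)$ via \eqref{eq:lambda2-character-min}, note that $q(\chi)$ divides $\exp(\Gamma)\leq 4$, and apply Theorem~\ref{thm:low-order-bottom-character}. Your extra checks on the degenerate cases are sound but not needed: the trivial group is already excluded by the standing convention that $S$ is nonempty with $0\notin S$, and the disconnected case is covered automatically.
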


\begin{proof}
By equation \eqref{eq:lambda2-character-min} there is some nontrivial character which attains $\lambda_2(G)$ the image size of which divides $\exp(\Gamma)$ and is therefore at most four. Then we just apply Theorem \ref{thm:low-order-bottom-character}.
\end{proof}


We finally note that kernels are not necessarily exact at order four with a simple example. Let $G=C_4=\Cay(\Z_4,\{1,-1\})$ and $\chi(x)=i^x$. The kernel is the singleton $\{0\}$. Four of the eight directed transitions cross it, so $N_G(\{0\})=\nicefrac{1}{2}$. Its denominator is $2 \cdot (1/4) \cdot (3/4)=\nicefrac{3}{8}$ giving $\psi_G(\ker\chi)=\nicefrac{4}{3}$.
But $\lambda_2(C_4)=1$, and the half-circle cut $\{0,1\}$ has numerator and denominator that are both $\nicefrac{1}{2}$ and so the quotient rounding is genuinely stronger than kernel rounding even in the smallest nontrivial case.


\section{Cyclic averaging and approximation}\label{sec:averaging}
When $q(\chi)\geq 5$, the squared chordal distance associated with $\chi$ violates the triangle inequality. This does not imply an integrality gap, since another feasible metric may still attain the cut optimum. In order to treat arbitrary character orders we will keep the GL feasible region unchanged but we will replace $G$ by a cyclically averaged walk in the same spirit as in \cite{dorsiaetall25}. In the next two subsections we will compare the metric objective values on $G$ and the cyclically averaged $G^\sharp$, and then compute the simplified Fourier spectrum of $G^\sharp$. We will then round a minimizing character by a cut in its cyclic quotient.

\subsection{Metric objectives under full cyclic averaging}
Let $r\geq 2$ and $0 \leq \ell \leq r-1$ (note that we include the 0 to be fully cyclic), then define
\begin{equation}\label{eq:Lr-def}
  L_r(\ell)=\min\{\ell,r-\ell\}.
\end{equation}
to be the smallest number of $+/-1$ steps from $0$ to $\ell$ in the cycle $\Z_r$. We also define
\begin{equation}\label{eq:alpha-def}
  \alpha(r)~=~\frac1{r}\sum_{\ell=0}^{r-1}L_r(\ell) 
  ~=~ \begin{cases}
    \nicefrac{r}{4},&r\text{ even},\\[6pt]
    \frac{r^2-1}{4r},&r\text{ odd}
  \end{cases}.
\end{equation}
to be the average distance around a cycle of length $r$ and it measures how much we have to pay if we replace one generator step by a uniformly random move in the cyclic subgroup it generates. For the closed form of $\alpha(r)$ we notice that for $r=2k$ then 
\[
  \sum_{\ell=0}^{r-1}L_r(\ell)
  =2(1+\cdots+(k-1))+k=k^2,
\]
so $\alpha(2k)=k^2/(2k)=k/2=r/4$.
If $r=2k+1$
\[
  \sum_{\ell=0}^{r-1}L_r(\ell)
  =2(1 + \cdots + k)=k(k+1),
\]
so $\alpha(2k+1)=k(k+1)/(2k+1)=\nicefrac{(r^2-1)}{4r}$. 
Moreover, $\alpha(r)$ is nondecreasing and $\alpha(r)=r/4+\mathcal{O}(1)$. 

For the generator multiset $S$ we subsequently define
\begin{equation}\label{eq:rho-def}
  \rho(S)=\max_{s\in S}\alpha(\ord(s))
\end{equation}
which is the worst such cost among all the generators which is precisely the loss we will suffer when we wish to compare GL metrics before and after the cyclic averaging.

We now construct the fully cyclically averaged  graph:
\begin{definition}[Cyclic averaging]\label{def:cyclic-averaging}
The cyclically averaged walk $G^\sharp$ is the symmetric weighted Abelian Cayley random walk generated with the following steps:
\begin{enumerate}
  \item we sample an occurrence $s\sim S$ and $\ell$ uniformly from $\{0,\ldots,r_s-1\}$, then
  \item move from $x$ to $x+\ell s$.
\end{enumerate}
\end{definition}


We note that the inclusion of the zero element turns cyclic averaging into an exact subgroup projection but, on the other hand, the exclusion of it it leaves a shifted  version of that projection that depends on the order.

\subsection{Averaged path inequality}
In order to compare $G^\sharp$ with $G$ we will represent each averaged step $\ell \cdot s$ with a shortest path of $L_{r_s}(\ell)$ steps in the directions $+s$ or $-s$ and we will then average the resulting triangle inequality over the starting vertex.

\begin{lemma}[Averaged cyclic path inequality]\label{lem:averaged-path}
Let $d$ be a metric on $\Gamma$. For each nonzero $s\in\Gamma$ with $r=\ord(s)$ and each $1\leq \ell \leq r-1$ we have that
\begin{equation}\label{eq:averaged-path}
  \E_{x\sim\Gamma}d(x,x+\ell s)
  ~\leq~ L_r(\ell) \cdot \E_{x\sim\Gamma}d(x,x+s).
\end{equation}
\end{lemma}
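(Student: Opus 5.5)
The plan is to telescope along a shortest path in the cycle generated by $s$ and then use translation invariance \eqref{eq:translation-invariance} to make every summand the same. Fix $s$, $r=\ord(s)$ and $1\le \ell\le r-1$, and put $m=L_r(\ell)=\min\{\ell,r-\ell\}$. First I will pick a sign $\sigma\in\{+1,-1\}$ with $\ell s=\sigma m s$. When $\ell\le r-\ell$ we take $\sigma=+1$ and $m=\ell$. Otherwise we take $\sigma=-1$ and $m=r-\ell$, which works because $rs=0$ gives $\ell s=-(r-\ell)s$. For each $x$ this yields the walk
\[
x,\; x+\sigma s,\; x+2\sigma s,\;\ldots,\; x+m\sigma s=x+\ell s
\]
of exactly $m$ steps in the direction $\sigma s$.

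Next I apply the triangle inequality for the semimetric $d$ along this walk, repeated $m-1$ times:
\[
d(x,x+\ell s)\le\sum_{j=0}^{m-1} d\bigl(x+j\sigma s,\;x+(j+1)\sigma s\bigr).
\]
I then take $\E_{x\sim\Gamma}$ of both sides. For each $j$, translation invariance with $h=j\sigma s$, applied to the function $f(x)=d(x,x+\sigma s)$, shows that the $j$-th summand has the same expectation as $\E_x d(x,x+\sigma s)$.

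The only remaining point is the case $\sigma=-1$. There I need $\E_x d(x,x-s)=\E_x d(x,x+s)$. This follows from symmetry of $d$, which gives $d(x,x-s)=d(x-s,x)$, followed by the substitution $x\mapsto x+s$, again by translation invariance. Summing the $m$ equal terms then gives $\E_x d(x,x+\ell s)\le m\,\E_x d(x,x+s)=L_r(\ell)\,\E_x d(x,x+s)$, which is \eqref{eq:averaged-path}.

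There is no real obstacle here. The only steps needing care are choosing the shorter direction, so that the constant is $L_r(\ell)$ rather than $\ell$, and the sign bookkeeping that uses symmetry of $d$. No Euclidean structure is used; only the semimetric axioms of $d$ are needed.
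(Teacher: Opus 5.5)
Your proposal is correct and follows essentially the same route as the paper's proof. Both telescope the triangle inequality along a shortest path in $\langle s\rangle$ of $L_r(\ell)$ steps, all $+s$ or all $-s$, and then use translation invariance, together with symmetry of $d$ for the $-s$ direction, to show that every summand has average $\E_x d(x,x+s)$.
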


\begin{proof}
Let $L=L_r(\ell)$. In the cyclic subgroup $\langle s\rangle$ we choose a shortest path $0=y_0,y_1,\ldots,y_L=\ell s$ the increments of which are all either $+s$ or all $-s$. Now for each $x$ the triangle inequality gives us
\[
  d(x,x+\ell s)
  ~\leq~ \sum_{j=0}^{L-1}d(x+y_j,x+y_{j+1}).
\]
We now average over uniform $x$. If $y_{j+1}-y_j=s$, translation invariance gives
\[
  \E_x d(x+y_j,x+y_j+s)=\E_u d(u,u+s).
\]
If the step is $-s$ then symmetry of $d$ and a change of variables give
\[
  \E_xd(x+y_j,x+y_j-s)=\E_ud(u,u-s)=\E_ud(u,u+s).
\]
Each one of the $L$ terms has the same average and this the claim.
\end{proof}

\begin{proposition}\label{prop:metric-averaging}
For each metric $d$ on $\Gamma$,  $N_{G^\sharp}(d)  ~\leq~ \rho(S) \cdot N_G(d)$
and, consequently, $\gl(G^\sharp) \leq \rho(S) \cdot \gl(G)$.
\end{proposition}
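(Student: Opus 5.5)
The plan is to expand $N_{G^\sharp}(d)$ by conditioning on the sampled generator occurrence $s\sim S$ and then on $\ell$. By Definition \ref{def:cyclic-averaging},
\[
  N_{G^\sharp}(d)
  ~=~ \E_{s\sim S}\,\frac{1}{r_s}\sum_{\ell=0}^{r_s-1}\E_{x\sim\Gamma}d(x,x+\ell s).
\]
The term $\ell=0$ contributes $d(x,x)=0$, and for this value $L_{r_s}(0)=0$ as well. For each $1\le\ell\le r_s-1$ we apply Lemma \ref{lem:averaged-path}, which gives $\E_x d(x,x+\ell s)\le L_{r_s}(\ell)\,\E_x d(x,x+s)$. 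Summing over $\ell$ and using the definition \eqref{eq:alpha-def}, the inner average is at most $\alpha(r_s)\,\E_x d(x,x+s)$.

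Since $\alpha(r_s)\le\rho(S)$ by \eqref{eq:rho-def}, and since $\E_x d(x,x+s)\ge0$, we can replace $\alpha(r_s)$ by $\rho(S)$ in each term. Averaging over $s\sim S$ then yields $\rho(S)\,\E_{s}\E_x d(x,x+s)=\rho(S)\,N_G(d)$. This proves the first claim. Note that $s\ne 0$ in $S$, so $r_s\ge2$ and the lemma applies.

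For the consequence, let $d$ be any feasible GL metric with $D(d)>0$. The feasible region and the denominator $D(d)$ depend only on $\Gamma$, not on the walk, so $d$ is also feasible for $G^\sharp$ with the same denominator. Therefore
\[
  \gl(G^\sharp)\le \frac{N_{G^\sharp}(d)}{D(d)}\le \rho(S)\,\frac{N_G(d)}{D(d)}.
\]
Taking the infimum over $d$ as in \eqref{eq:arv-def} gives $\gl(G^\sharp)\le\rho(S)\gl(G)$.

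No step is a real obstacle. The only points to watch are that GL feasibility (squared Euclidean plus triangle inequalities) is what licenses the use of Lemma \ref{lem:averaged-path}, and that the denominator is unchanged between $G$ and $G^\sharp$. One should also note that $G^\sharp$ is a symmetric weighted walk, possibly with self-loops, for which $N_{G^\sharp}$ and $\gl(G^\sharp)$ are defined exactly as in \eqref{eq:metric-ND} and \eqref{eq:arv-def}.
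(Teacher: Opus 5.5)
Your proposal is correct and follows the paper's proof essentially step for step. You condition on the occurrence $s$, apply Lemma~\ref{lem:averaged-path} for each $\ell\ge 1$ (the $\ell=0$ term vanishes) to get the factor $\alpha(r_s)\le\rho(S)$, and then use that GL feasibility and $D(d)$ do not depend on the walk before taking the infimum. One small remark: the lemma needs only symmetry and the triangle inequality, not the squared-Euclidean condition, which is why your first claim holds for arbitrary metrics.
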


\begin{proof}
Let us fix an occurrence $s\in S$. The Averaging Lemma \ref{lem:averaged-path} over uniform $\ell \in\{0,\ldots,r_s-1\}$ gives
\[
  \frac{1}{r_s}\sum_{\ell=0}^{r_s-1}
  \E_xd(x,x+\ell s)
  \leq \alpha(r_s)\E_xd(x,x+s).
\]
We note that the $\ell = 0$ term is zero and we average over $s\sim S$ and use $\alpha(r_s)\le\rho(S)$, the first inequality of the statement.

For a feasible GL metric, the denominator $D(d)=\E_{x,y}d(x,y)$ depends only on the uniform vertex measure, not on the graph. Therefore
\[
  \frac{N_{G^\sharp}(d)}{D(d)}
  ~\leq~ \rho(S) \cdot \frac{N_G(d)}{D(d)}.
\]
Finally, it is enough to take the infimum over the feasible metrics for $G$.
\end{proof}

We remark that  Oveis Gharan  and Trevisan compare a GL solution on $G$ with the same solution on a walk-power graph $G^t$, controlling the increase by decomposing a random long walk and averaging over translations \cite{Trevisan21}. In our Proposition \ref{prop:metric-averaging} above we use the same  averaging idea but on a different graph: \emph{averaging} treats each cyclic generator direction independently which means that the loss depends on the cyclic order and neither on the random-walk length nor on the actual degree.


\subsection{The simplified Fourier spectrum}
The Fourier spectrum of the cyclically averaged walk has the following simplified and useful form:

\begin{lemma}[Spectrum of $G^\sharp$]\label{lem:saturated-spectrum}
For each character $\chi\in\Gammahat$ we have
\begin{equation}\label{eq:saturated-spectrum}
  \lambda_\chi(G^\sharp)
  =\E_{s\sim S} ~\one_{\{\chi(s)\neq 1\}}.
\end{equation}
\end{lemma}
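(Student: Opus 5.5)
The plan is to apply Proposition \ref{prop:character-diagonalization} to $G^\sharp$, which is again a symmetric Abelian Cayley walk, and then evaluate the resulting character sum one generator at a time. First I will check that $G^\sharp$ is symmetric. For a fixed occurrence $s$, the map $\ell\mapsto r_s-\ell$ (mod $r_s$) is a bijection of $\{0,\ldots,r_s-1\}$ that sends $\ell s$ to $-\ell s$. Hence the step distribution of $G^\sharp$ is symmetric, and the diagonalization applies with
\[
  a_\chi(G^\sharp)=\E_{s\sim S}\,\frac{1}{r_s}\sum_{\ell=0}^{r_s-1}\chi(\ell s)
  =\E_{s\sim S}\,\frac{1}{r_s}\sum_{\ell=0}^{r_s-1}\chi(s)^{\ell}.
\]
Here I used $\chi(\ell s)=\chi(s)^\ell$, which holds because $\chi$ is a homomorphism. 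The zero step $\ell=0$ is simply a self-loop and contributes $\chi(0)=1$.

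The key step is to evaluate the inner geometric sum exactly. By Proposition \ref{prop:character-facts}(ii), $z=\chi(s)$ satisfies $z^{r_s}=1$. If $z=1$, the average is $1$. If $z\neq1$, the sum equals $(1-z^{r_s})/(1-z)=0$, so the average is $0$. Equivalently, the restriction of $\chi$ to the cyclic group $\langle s\rangle$ is a character of that group, and the average of a nontrivial character over a finite group vanishes. This is the point where including $\ell=0$ matters: the averaging becomes the exact projection onto $\one_{\{\chi(s)=1\}}$.

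Substituting this back gives $a_\chi(G^\sharp)=\E_{s\sim S}\one_{\{\chi(s)=1\}}$. Therefore
\[
  \lambda_\chi(G^\sharp)=1-a_\chi(G^\sharp)=\E_{s\sim S}\one_{\{\chi(s)\neq1\}},
\]
which is \eqref{eq:saturated-spectrum}. In particular the eigenvalue is real, with no phase left.

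I do not expect a real obstacle. The only point needing care is the bookkeeping for $G^\sharp$ as a weighted multigraph. Each occurrence $s$ carries weight $1/|S|$, spread evenly over its $r_s$ multiples, and zero steps are kept as loops. With these weights the normalized adjacency operator is exactly the averaged expression above, so the diagonalization of Proposition \ref{prop:character-diagonalization} carries over verbatim to weighted symmetric step distributions.
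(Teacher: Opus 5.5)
Your proposal is correct and follows the paper's proof: both fix an occurrence $s$, write its conditional adjacency contribution as the geometric average $\frac{1}{r_s}\sum_{\ell=0}^{r_s-1}\chi(s)^\ell$, observe that this equals $1$ or $0$ according to whether $\chi(s)=1$, and then average over $s\sim S$. Your additional checks, that $G^\sharp$ is symmetric and that the diagonalization carries over to weighted steps with self-loops, fill in details the paper leaves implicit.
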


\begin{proof}
We fix an occurrence $s \in S$ and recall that $r_s = \ord(s)$. Then the conditional adjacency contribution is 
\[
\frac{1}{r_s} \sum_{\ell=0}^{r_s-1} \chi(\ell s) 
~=~
\frac{1}{r_s} \sum_{\ell = 0}^{r_s-1} \chi(s)^\ell.
\]
If $\chi(s) = 1$ then this average equals to one so the conditional Laplacian contribution is  zero otherwise the $\chi(s)$ is a non-trivial root of unity the order of which divides the order $r_s$ and the summation above becomes zero.  We average over $s \sim S$ and conclude the statement of the Lemma.

\end{proof}


\subsection{Kernel rounding and the approximation}
The directions under which a character remains unchanged lie in its kernel and they preserve every kernel coset. We can calculate the contribution to the kernel cut of the remaining directions. In the following we assume $\chi\ne\mathbf{1}$, let $q=q(\chi)$.

\begin{lemma}[Kernel rounding with image-size factor]\label{lem:kernel-rounding}
We have that
\begin{equation}\label{eq:kernel-beta}
  \psi_G(\ker \chi)
  ~=~ \frac{q}{q-1} \cdot \Prb_{s\sim S}[\chi(s)\neq 1]
  ~=~ \frac{q}{q-1} \cdot \bigg[\lambda_\chi(G^\sharp)\bigg].
\end{equation}
\end{lemma}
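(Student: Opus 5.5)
The plan is to compute the numerator and denominator of $\psi_G(\ker\chi)$ directly, as in the $q=2,3$ cases of Lemma \ref{lem:low-order-rounding}, and then read off the second equality from Lemma \ref{lem:saturated-spectrum}. Write $\beta=\Prb_{s\sim S}[\chi(s)\neq 1]$ and $K=\ker\chi$.

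For the denominator, \eqref{eq:kernel-density} gives $\mu(K)=1/q$. Then \eqref{eq:cut-denominator} yields
\[
D(K)=2\cdot\frac1q\cdot\frac{q-1}{q}=\frac{2(q-1)}{q^2}.
\]

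For the numerator, condition on the sampled occurrence $s$.

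\emph{Case $\chi(s)=1$.} Then $s\in K$, so $x$ and $x+s$ lie in the same coset of $K$. Such a step never crosses the cut.

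\emph{Case $\chi(s)\neq1$.} Then $s\notin K$, so $x$ and $x+s$ lie in distinct cosets. The step crosses $K$ exactly when $x\in K$ or $x+s\in K$, and these two events are disjoint. By translation invariance \eqref{eq:translation-invariance}, each event has probability $\mu(K)=1/q$ over uniform $x$. Hence the conditional crossing probability is $2/q$.

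Averaging over $s\sim S$ gives $N_G(K)=2\beta/q$. Dividing,
\[
\psi_G(K)=\frac{2\beta/q}{2(q-1)/q^2}=\frac{q}{q-1}\,\beta.
\]
This is the first equality. The second follows at once, because Lemma \ref{lem:saturated-spectrum} states that $\lambda_\chi(G^\sharp)=\E_{s\sim S}\one_{\{\chi(s)\neq1\}}=\beta$.

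Two points need care. The disjointness in the second case relies on $\chi(s)\neq1$, which is why the two cases must be separated. The multiset convention also matters: $\beta$ must be computed with multiplicity, consistent with $s\sim S$ in \eqref{eq:cut-numerator}. Apart from this the argument is routine, and no step should be a real obstacle. As a check, the formula reproduces the $q=2$ and $q=3$ computations of Lemma \ref{lem:low-order-rounding}, namely $2\beta$ and $\tfrac32\beta$.
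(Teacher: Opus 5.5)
Your proposal is correct and follows the paper's proof essentially step for step. It uses the same density $\mu(\ker\chi)=1/q$, the same case split on whether $\chi(s)=1$ with the disjoint crossing events $x\in K$ and $x\in K-s$ giving $N_G(K)=2\beta/q$, and the same appeal to Lemma \ref{lem:saturated-spectrum} for the second equality, all under the standing assumption $\chi\neq\mathbf{1}$ (so $q\ge 2$ and $K$ is a proper cut).
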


\begin{proof}
We first set $\beta=\Prb_{s\sim S}[\chi(s)\neq 1]$.
By \eqref{eq:kernel-density} the $\mu(K)$ is equal to $1/q$. If $\chi(s)=1$ translation by $s$ preserves every kernel coset and never crosses $K = \ker \chi$. If $\chi(s)\ne1$ then of course $K$ and $K-s$ are disjoint. A directed edge $(x,x+s)$ crosses $K$ when either $x\in K$ or $x\in K-s$ and so the  probability of crossing (over some  $x$) is $2/q$ and so we can conclude that $N_G(K)=\nicefrac{2\beta}{q}$.
The denominator on the other hand is $D(K) = 2 \cdot \nicefrac{1}{q} \cdot \left(1-\nicefrac{1}{q}\right) = \frac{2(q-1)}{q^2}$.
Dividing the two terms gives us \eqref{eq:kernel-beta}. By the previous Lemma \ref{lem:saturated-spectrum} we have that $\lambda_\chi(G^\sharp) = \Pr_s[\chi_s \neq 1]$ so we get the claimed equality.
\end{proof}

The combination of Corollary \ref{cor:quotient-kernel-chain} and Lemma \ref{lem:kernel-rounding} give 
\[
  \psi(G)
  ~\leq~\roundval(\chi^*)
  ~\leq~\psi_G(\ker\chi^*)
  ~=~\frac{q^*}{q^*-1}\cdot \bigg[\lambda_{\chi^*}(G^\sharp)\bigg].
\]
Since the quantity $\chi^*$ minimizes among the nontrivial characters we have that $\lambda_{\chi^*}(G^\sharp)=\lambda_2(G^\sharp)$. 
On the other hand, by the spectral lower bound and the averaging comparison we get
\[
  \lambda_2(G^\sharp)
  ~\leq~\gl(G^\sharp)
  ~\leq~\rho(S) \cdot \gl(G).
\]
Combining the above and using that $q^*/(q^*-1)\leq2$ we get the main approximation theorem: 
\begin{theorem}[Cyclic averaging and quotient rounding]\label{thm:general-bound}
Let $G=\Cay(\Gamma,S)$ and let $\chi^*$ be a nontrivial character minimizing $\lambda_\chi(G^\sharp)$ i.e., $\chi^* \in \arg \min \Pr_s[\chi(s) \neq 1]$. Set $q^*=q(\chi^*)=|\chi^*(\Gamma)|$.
Then
\begin{equation}\label{eq:general-round-bound}
  \psi(G)
  ~\leq~ \roundval(\chi^*)
  ~\leq~ \frac{q^*}{q^*-1}\rho(S)\gl(G).
\end{equation}
The explicit kernel cut $K=\ker\chi^*$ satisfies the same right-hand side in \eqref{eq:general-round-bound}.
\end{theorem}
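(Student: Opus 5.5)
The proof is a chain of already established comparisons, assembled in order. Fix $\chi^*$ as in the statement. By Lemma \ref{lem:saturated-spectrum}, $\lambda_\chi(G^\sharp)=\Prb_{s\sim S}[\chi(s)\neq1]$ for every character $\chi$. Minimizing this probability over nontrivial characters is therefore the same as minimizing $\lambda_\chi(G^\sharp)$. By \eqref{eq:lambda2-character-min} applied to the Cayley walk $G^\sharp$ (a symmetric weighted Abelian Cayley walk, so Proposition \ref{prop:character-diagonalization} applies verbatim), we get $\lambda_{\chi^*}(G^\sharp)=\lambda_2(G^\sharp)$.

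The upper side comes next. Corollary \ref{cor:quotient-kernel-chain} gives $\psi(G)\le\roundval(\chi^*)\le\psi_G(\ker\chi^*)$. Lemma \ref{lem:kernel-rounding} then evaluates the kernel cut exactly as $\psi_G(\ker\chi^*)=\frac{q^*}{q^*-1}\lambda_{\chi^*}(G^\sharp)=\frac{q^*}{q^*-1}\lambda_2(G^\sharp)$.

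For the lower side, I use the fundamental chain \eqref{eq:fundamental-chain} for $G^\sharp$, namely $\lambda_2(G^\sharp)\le\gl(G^\sharp)$, and then Proposition \ref{prop:metric-averaging}, which gives $\gl(G^\sharp)\le\rho(S)\gl(G)$. Multiplying through by the positive factor $q^*/(q^*-1)$ and concatenating gives \eqref{eq:general-round-bound}. The claim about $K=\ker\chi^*$ is the intermediate inequality $\psi_G(K)\le\frac{q^*}{q^*-1}\rho(S)\gl(G)$, which already appears in the chain.

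The only point needing care is justifying that \eqref{eq:fundamental-chain} and the Rayleigh characterization hold for $G^\sharp$, which has self-loops (the $\ell=0$ steps) and weights. I would check this by noting that the Rayleigh quotient argument of the introduction only uses that $A_{G^\sharp}$ is a symmetric Markov operator diagonalized by characters. The self-loops contribute zero to $N_{G^\sharp}(d)$ and are accounted for in the eigenvalue formula, so the coordinatewise sum argument goes through unchanged.

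A degenerate case also has to be handled. If $G$ is disconnected, then some nontrivial character is trivial on all of $S$, so $\lambda_2(G^\sharp)=0$. In that case the kernel cut has $N_G(K)=0$, and the bound reads $0\le0$, which holds consistently.
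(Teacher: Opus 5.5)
Your proof is correct and follows the paper's argument essentially step for step. Corollary~\ref{cor:quotient-kernel-chain} and Lemma~\ref{lem:kernel-rounding} give the upper chain ending at $\frac{q^*}{q^*-1}\lambda_{\chi^*}(G^\sharp)=\frac{q^*}{q^*-1}\lambda_2(G^\sharp)$, and the spectral lower bound for $G^\sharp$ together with Proposition~\ref{prop:metric-averaging} closes it; your extra checks on the self-loops of $G^\sharp$ and on the disconnected case are sound and only make explicit what the paper leaves implicit.
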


\begin{corollary}\label{cor:bounded-order}
For connected $G$, if each generator has order at most $R$ then
\[
  \frac{\psi(G)}{\gl(G)}\leq 2 \cdot \alpha(R)
  =
  \begin{cases}
    \nicefrac{R}{2},&R\text{ even}, \\
    \nicefrac{R^2-1}{2R},&R\text{ odd}.
  \end{cases}
\]
\end{corollary}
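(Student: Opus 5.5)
The plan is to read the corollary off Theorem \ref{thm:general-bound}. Two facts feed into it: the factor $q^*/(q^*-1)$ is at most $2$, and $\rho(S)\le\alpha(R)$ because $\alpha$ is nondecreasing.

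First, connectedness. Since $G$ is connected, $\lambda_2(G)>0$, and by \eqref{eq:fundamental-chain} we get $\gl(G)>0$, so the ratio $\psi(G)/\gl(G)$ is well defined. Every generator $s\in S$ is nonzero, so $r_s=\ord(s)\ge 2$. Also, $\chi^*$ is nontrivial, so $q^*=q(\chi^*)\ge 2$ and hence $q^*/(q^*-1)\le 2$.

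Second, monotonicity of $\alpha$, used to pass from $\alpha(r_s)$ to $\alpha(R)$. The paper states it without proof, so I would verify it from the closed form \eqref{eq:alpha-def}:
\[
\alpha(2k)=\tfrac{k}{2}\le \tfrac{k(k+1)}{2k+1}=\alpha(2k+1),
\qquad
\tfrac{k(k+1)}{2k+1}\le \tfrac{k+1}{2}=\alpha(2k+2).
\]
The first inequality is equivalent to $2k+1\le 2k+2$. The second is equivalent to $2k\le 2k+1$. Both are trivial. Since each $r_s\le R$, this gives $\rho(S)=\max_s\alpha(r_s)\le\alpha(R)$.

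Now apply Theorem \ref{thm:general-bound}:
\[
\psi(G)\le \frac{q^*}{q^*-1}\,\rho(S)\,\gl(G)\le 2\,\alpha(R)\,\gl(G).
\]
Dividing by $\gl(G)>0$ gives the claimed ratio bound. Substituting the closed form of $\alpha(R)$ from \eqref{eq:alpha-def} gives $2\alpha(R)=R/2$ for $R$ even and $(R^2-1)/(2R)$ for $R$ odd.

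None of these steps is a real obstacle. The only point needing care is monotonicity of $\alpha$ across parities, and the two-line check above handles it. Without it, replacing $\alpha(r_s)$ by $\alpha(R)$ would not be justified.
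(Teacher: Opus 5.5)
Your proof is correct and follows the route the paper intends: the corollary comes straight from Theorem~\ref{thm:general-bound}, using $q^*/(q^*-1)\le 2$ and $\rho(S)\le\alpha(R)$. The paper only asserts that $\alpha$ is nondecreasing, so your parity check of that fact, together with using connectedness to guarantee $\gl(G)>0$, fills in details the paper leaves implicit.
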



\subsection{Algorithm}

The proof also yields an explicit algorithm that does not solve the SDP.

\begin{corollary}[Explicit kernel algorithm]\label{cor:kernel-algorithm}
Assume the finite Abelian group is given by an explicit cyclic decomposition and the symmetric generator multiset $S$ is given explicitly. In time polynomial in $n=|\Gamma|$, $|S|$, and the bit length of the decomposition, one can output a cut $K$ satisfying
\[
  \psi_G(K)
  ~\leq~ \frac{q^*}{q^*-1} \cdot \rho(S) \cdot \gl(G).
\]
\end{corollary}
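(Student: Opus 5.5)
The algorithm never solves the SDP. It enumerates the characters, evaluates a purely combinatorial quantity for each, and outputs the kernel of a minimizer. The guarantee then follows directly from Theorem \ref{thm:general-bound}, whose final sentence already states that the explicit kernel cut $K=\ker\chi^*$ satisfies $\psi_G(K)\le \frac{q^*}{q^*-1}\rho(S)\gl(G)$. So the proof reduces to two points: $\chi^*$ and $\ker\chi^*$ can be computed in polynomial time, and the output matches the object analyzed in that theorem.

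\textbf{Step 1: enumerate characters.} Write $\Gamma\cong\Z_{n_1}\times\cdots\times\Z_{n_k}$ using the given decomposition, and list all $n=|\Gamma|$ index vectors $a=(a_1,\dots,a_k)$, which parametrize $\Gammahat$ through $\chi_a$. For a nontrivial $a$ and each occurrence $s=(s_1,\dots,s_k)\in S$, test whether $\chi_a(s)=1$. Avoid floating point: $\chi_a(s)=1$ if and only if
\[
\sum_j a_j s_j\,(m/n_j)\equiv 0 \pmod m, \qquad m=n_k,
\]
which is exact integer arithmetic on numbers of bit length polynomial in the input. This gives
\[
\beta(a)=\Prb_{s\sim S}[\chi_a(s)\neq 1]
\]
in time $O(|S|\cdot k)$ arithmetic operations per character, so $O(n|S|k)$ in total. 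Take $a^*$ minimizing $\beta(a)$ over $a\neq 0$.

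\textbf{Step 2: check the minimizer is the right one.} By Lemma \ref{lem:saturated-spectrum}, $\beta(a)=\lambda_{\chi_a}(G^\sharp)$. Hence $\chi^*=\chi_{a^*}$ is exactly a character as in Theorem \ref{thm:general-bound}. Note that $\rho(S)$ and $q^*$ need not be computed for the algorithm; they only appear in the bound.

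\textbf{Step 3: output the kernel.} Compute $K=\ker\chi^*$ by scanning all $n$ elements $x\in\Gamma$ and applying the same congruence test to $x$. This takes $O(nk)$ operations.

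\textbf{Step 4: conclude.} By Lemma \ref{lem:kernel-rounding} together with the chain preceding Theorem \ref{thm:general-bound},
\[
\psi_G(K)=\frac{q^*}{q^*-1}\lambda_{\chi^*}(G^\sharp)\le \frac{q^*}{q^*-1}\rho(S)\gl(G).
\]

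\textbf{Disconnected case.} If $G$ is disconnected, some nontrivial character has $\beta=0$. Then $\psi_G(K)=0$, and the bound still holds.

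The only mildly delicate step is the exact-arithmetic membership test that replaces complex roots of unity; everything else is bookkeeping on top of Theorem \ref{thm:general-bound}.
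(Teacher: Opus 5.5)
Your proposal is correct and follows essentially the same route as the paper: enumerate the product characters, compute $\Prb_{s\sim S}[\chi(s)\neq 1]$ exactly, output the kernel of a minimizer, and invoke Theorem~\ref{thm:general-bound}. Your integer congruence test and kernel scan just fill in details the paper leaves implicit, with one small fix: if the given cyclic decomposition is not in invariant-factor form, take $m=\operatorname{lcm}(n_1,\ldots,n_k)$ rather than $m=n_k$, so that every $m/n_j$ is an integer.
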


\begin{proof}
The explicit decomposition lists all $n$ characters as product characters. For each nontrivial $\chi$ we compute \eqref{eq:saturated-spectrum} exactly. This is easy as it only requires for testing whether $\chi(s)=1$ and for knowing the $\ord(s)$. Choose a minimizer $\chi^*$ and output its kernel. The value guarantee follows by Theorem \ref{thm:general-bound} and by using $\gl(G) \leq \psi(G)$.
\end{proof}

\begin{remark}
Enumerating all $|\Gamma|$ characters is polynomial in $n=|\Gamma|$. It can be exponential in a succinct description whose bit length is only $\mathcal{O}(\log|\Gamma|)$. The algorithmic claim is made in the explicit $n$-vertex/group decomposition model.
\end{remark}


Finally, we parallel Lemma 8.2 from d'Orsi et al. \cite{dorsiaetall25} for Cayley graphs over $\mathbb{F}_p^k$, $p$ prime.

\begin{corollary}\label{cor:prime-exponent}
Let $p$ be prime and let $G=\Cay(\mathbb{F}_p^k,S)$ be connected. If $p\ge5$, then
\begin{equation}\label{eq:prime-bound}
  \frac{\psi(G)}{\gl(G)}
  ~\leq~\frac{p}{p-1} \cdot \alpha(p) 
  = \frac{p}{p-1} \cdot \frac{(p-1)(p+1)}{4p} 
  ~=~ \frac{p+1}{4}.
\end{equation}
\end{corollary}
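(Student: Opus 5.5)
The plan is to specialize Theorem \ref{thm:general-bound} to $\Gamma=\mathbb{F}_p^k$. Two facts about this group do the work. Every nonzero element has order exactly $p$, and every nontrivial character has image of size exactly $p$.

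The first fact controls $\rho(S)$. Since $0\notin S$, every generator $s\in S$ satisfies $\ord(s)=p$, so definition \eqref{eq:rho-def} gives $\rho(S)=\alpha(p)$. Because $p\geq5$ is odd, the closed form \eqref{eq:alpha-def} gives
\[
\alpha(p)=\frac{p^2-1}{4p}=\frac{(p-1)(p+1)}{4p}.
\]

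The second fact controls $q^*$. For a nontrivial character $\chi$, the image $\chi(\Gamma)$ is a nontrivial subgroup of the unit circle. By the discussion after \eqref{eq:kernel-density}, its size divides $\exp(\Gamma)=p$. Since $p$ is prime, $q(\chi)=p$. In particular the minimizing character $\chi^*$ of Theorem \ref{thm:general-bound} has $q^*=p$.

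Connectedness of $G$ guarantees $\lambda_2(G)>0$, so $\gl(G)\geq\lambda_2(G)>0$ by \eqref{eq:fundamental-chain} and the ratio is well defined. Plugging $q^*=p$ and $\rho(S)=\alpha(p)$ into \eqref{eq:general-round-bound} gives
\[
\psi(G)\le \frac{p}{p-1}\,\alpha(p)\,\gl(G).
\]
Dividing by $\gl(G)$ and simplifying $\frac{p}{p-1}\cdot\frac{(p-1)(p+1)}{4p}=\frac{p+1}{4}$ yields \eqref{eq:prime-bound}.

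There is no real obstacle here. The only points needing care are that $q^*$ equals $p$ exactly, rather than merely dividing $p$, and that $p$ is odd so the odd-case formula for $\alpha$ applies. The hypothesis $p\ge5$ is not actually needed for the inequality. For $p\in\{2,3\}$ the bound still holds, and there Corollary \ref{cor:exp-four-exactness} in fact gives a ratio of $1$.
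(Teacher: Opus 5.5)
Your proof is correct and follows the paper's argument exactly. Every generator has order $p$, so $\rho(S)=\alpha(p)$; every nontrivial character image has size $p$, so $q^*=p$; you then substitute into Theorem \ref{thm:general-bound} and use the odd-$p$ formula for $\alpha$.

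One correction to your closing remark: the claim that $p\ge5$ is unnecessary fails at $p=2$. There $\alpha(2)=\frac12$ comes from the even formula, so $\frac{p}{p-1}\alpha(p)=1$, while the stated right-hand side $\frac{p+1}{4}=\frac34$ cannot bound $\psi(G)/\gl(G)\ge1$. Among $p<5$, the closed-form bound extends only to $p=3$.
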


\begin{proof}
Every nonzero generator has order $p$, so $\rho(S)=\alpha(p)$. Every nontrivial character of $\mathbb{F}_p^k$ has image size exactly $p$, so $q^*=p$. We apply Theorem \ref{thm:general-bound} and use $\frac{p}{p-1} \alpha(p)=\frac{p+1}{4}$ for odd $p$.
\end{proof}

\begin{remark}
In this prime field setting, our metric comparison is an SDP extension of d’Orsi et al. Lemma 8.2  for the comparison for cut metrics. They prove a  $\nicefrac{p+1}{4}$ bound but in our case this holds more generally \emph{for each Goemans–Linial feasible metric} instead of only cut metrics as in d'Orsi et al. When we combine our bound with the exact kernel identity  we are able to prove the integrality gap bound $\psi(G)/\gl(G) \leq \nicefrac{p+1}{4}$ (something that does not, at least not directly, follow from d'Orsi et al.). 
\end{remark}



\subsection{Exact averaging constant}
The two constants in Theorem \ref{thm:general-bound} arise from different steps. We show that the metric averaging factor $\alpha(r)$ is exact.

\begin{theorem}\label{thm:alpha-optimal}
For every $r\ge2$,
\begin{equation}\label{eq:alpha-optimal}
  \sup_d
  \frac{
    \frac1{r}\sum_{\ell=0}^{r-1}\E_{x\in\Z_r}d(x,x+\ell)
  }{
    \E_{x\in\Z_r}d(x,x+1)
  }
  =\alpha(r),
\end{equation}
where the supremum is over all metrics $d$ with positive denominator. The supremum is attained by a cut metric and hence also within the GL-feasible class.
\end{theorem}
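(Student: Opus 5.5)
The proof has two directions: an upper bound valid for every metric, and a cut metric that attains it.

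For the upper bound we specialize Lemma \ref{lem:averaged-path} to $\Gamma=\Z_r$ and $s=1$, so that $\ord(s)=r$. For each $0\le \ell\le r-1$ it gives
\[
\E_x d(x,x+\ell)\le L_r(\ell)\,\E_x d(x,x+1).
\]
The $\ell=0$ term is zero on both sides. Averaging over $\ell$ bounds the numerator of \eqref{eq:alpha-optimal} by $\alpha(r)$ times the denominator, so the supremum is at most $\alpha(r)$. This works for any metric, and in particular for every GL-feasible one.

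For the lower bound we look for a cut metric attaining $\alpha(r)$. Take an arc cut $A=\{0,1,\dots,k-1\}\subseteq\Z_r$ with $k=\lfloor r/2\rfloor$.

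The denominator is $\E_x d_A(x,x+1)=2/r$, since exactly the two boundary edges of the arc are crossed.

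For the numerator, fix a shift $\ell$. The pair $(x,x+\ell)$ is separated exactly when exactly one of $x$ and $x+\ell$ lies in $A$. We count this directly: for an arc of length $k$ and a shift of cyclic length $m=L_r(\ell)\le k$, the number of $x$ with $x\in A$ and $x+\ell\notin A$ is $m$, and by symmetry the same count holds for $x\notin A$, $x+\ell\in A$. The condition $m\le k$ always holds because $m\le\lfloor r/2\rfloor=k$, using the complementary arc when needed. Hence $\E_x d_A(x,x+\ell)=2L_r(\ell)/r$.

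Averaging over $\ell$ gives a numerator of $2\alpha(r)/r$, so the ratio is exactly $\alpha(r)$. Since $d_A$ is a cut metric, it is feasible for GL by Lemma \ref{lem:cut-feasible}. This shows the supremum is attained within the GL-feasible class.

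The main point needing care is the count $|\{x : x\in A,\ x+\ell\notin A\}|=\min\{L_r(\ell),k\}$ for a half-arc. It is cleanest to note that $|A\setminus(A-\ell)|=|A\setminus(A+\ell)|$. Then one checks that translating an arc of length $k$ by $m\le r-k$ positions, with $r-k\ge k\ge m$, leaves an overlap of exactly $k-m$ elements, which uses $k\le r/2$. The odd case $r=2k+1$ needs the same check with $r-k=k+1$, which still suffices.
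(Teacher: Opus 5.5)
Your proof is correct and follows the paper's argument. The upper bound is Lemma~\ref{lem:averaged-path} with $s=1$, averaged over $\ell$. The lower bound uses the same half-arc cut $A=\{0,\dots,\lfloor r/2\rfloor-1\}$, with $|A\triangle(A-\ell)|=2L_r(\ell)$. Your overlap count (shifting by $m=L_r(\ell)\le k\le r-k$ leaves exactly $k-m$ common points) justifies that identity for both parities at once, which is slightly cleaner than the paper's more informal boundary-shifting explanation.
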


\begin{proof}
The upper bound is Lemma \ref{lem:averaged-path} averaged over $\ell$. For the lower bound, let $m=\lfloor r/2\rfloor$ and take the interval $A=\{0,1,\ldots,m-1\}\subset\Z_r$. We use its cut metric $d_A$. For a shift $\ell$ the symmetric difference of the interval and its translation has size $|A\triangle(A-\ell)|~=~2L_r(\ell)$.

This is easy to see once we choose the shorter orientation around the cycle. If we shift an interval by $L_r(\ell)$ we remove exactly $L_r(\ell)$ points at one boundary but at the same time we add the same number at the other. In the even antipodal case the two intervals are complementary and the same formula gives $r$. Therefore
\[
  \E_xd_A(x,x+\ell)=\frac{2L_r(\ell)}r,
  \qquad
  \E_xd_A(x,x+1)=\frac2r.
\]
The ratio in \eqref{eq:alpha-optimal} is exactly $  \frac1{r}\sum_{\ell=0}^{r-1}L_r(\ell)=\alpha(r)$.
\end{proof}

\section{Some examples and some consequences}\label{sec:examples}

We  give some examples that illustrate both the strength and the limitations of the  bounds of the previous sections. In particular, we discuss how $\rho(S)$ may be far away from the actual Goemans–Linial gap. 

\subsection{Cubelike graphs and exponent three and four}
A cubelike graph is a Cayley graph over $\mathbb{F}_2^k$. By Corollary \ref{cor:exp-four-exactness} each cubelike graph satisfies $\lambda_2(G)= \gl(G)= \psi(G)$ and an optimal cut is a hyperplane, namely the kernel of a bottom character.

We note that the  exact spectral solvability is already observed by d’Orsi et al. \cite{dorsiaetall25}: the Boolean Fourier characters are $\{\nicefrac{+1}{-1}\}$ valued and a bottom character indicates an optimal cut but our results recover this and extends the same exactness to all finite Abelian groups of exponent three and four.


\subsection{Cycles}

The  bound in Theorem \ref{thm:general-bound} is weak on a cycle because the generator has order $n$. On the positive side, the underlying averaging inequality is strong enough to determine the exact GL value.

\begin{proposition}
\label{prop:cycle-exact}
For each $n\geq 3$,
\begin{equation}\label{eq:cycle-exact}
  \gl(C_n)=\psi(C_n)
  =
  \begin{cases}
    4/n,& n \text{ is even},\\[6pt]
    (4n)/(n^2-1),&n\text{ is odd}.
  \end{cases}
\end{equation}
In fact, the same value is obtained if the SDP is relaxed further to all metrics satisfying the triangle inequalities, without the squared-Euclidean requirement.
\end{proposition}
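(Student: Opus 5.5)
The plan is to prove matching upper and lower bounds. For the upper bound I will exhibit a cut. Take $C_n=\Cay(\Z_n,\{+1,-1\})$ and the interval $A=\{0,\ldots,m-1\}$ with $m=\lfloor n/2\rfloor$. Exactly two of the $2n$ directed edges leave or enter $A$ from each side, so
\[
  N_G(A)=\frac{2}{n}.
\]
The denominator is $D(A)=2\mu(A)(1-\mu(A))$. For even $n$ this equals $1/2$; for odd $n$ it equals $2\cdot\frac{n-1}{2n}\cdot\frac{n+1}{2n}=\frac{n^2-1}{2n^2}$. The resulting ratios are $4/n$ and $4n/(n^2-1)$, which gives $\psi(C_n)\le$ the claimed value. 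Since $\gl\le\psi$, only the lower bound remains.

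For the lower bound I will prove it for every semimetric $d$ on $\Z_n$ (triangle inequalities only, with $D(d)>0$). This simultaneously gives the stronger statement about the relaxation to all metrics. The key tool is Lemma~\ref{lem:averaged-path} applied with $s=1$ and $r=n$. Writing $\ell=y-x$ and using translation invariance,
\[
  D(d)=\E_{x,y}d(x,y)=\frac1n\sum_{\ell=0}^{n-1}\E_x d(x,x+\ell).
\]
By the lemma each term is at most $L_n(\ell)\,\E_x d(x,x+1)$, so $D(d)\le\alpha(n)\,\E_xd(x,x+1)$. The numerator is
\[
  N_{C_n}(d)=\tfrac12\bigl(\E_xd(x,x+1)+\E_xd(x,x-1)\bigr)=\E_xd(x,x+1),
\]
by symmetry and a change of variables. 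Hence $N/D\ge 1/\alpha(n)$.

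Finally I plug in the closed form \eqref{eq:alpha-def}. For even $n$, $1/\alpha(n)=4/n$; for odd $n$, $1/\alpha(n)=4n/(n^2-1)$. These match the cut values exactly, so the chain $1/\alpha(n)\le\gl(C_n)\le\psi(C_n)\le1/\alpha(n)$ closes.

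The only step needing care is the identity expressing $D(d)$ as an average over shifts, so that the denominator is literally the cyclic average in Proposition~\ref{prop:metric-averaging}. In effect, $D$ is the numerator for $C_n^\sharp$, the complete graph with loops. Everything else is a direct computation. This also matches Theorem~\ref{thm:alpha-optimal}: the interval cut is the extremizer there as well.
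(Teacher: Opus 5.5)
Your proposal is correct and is essentially the paper's proof. The interval cut $A=\{0,\ldots,\lfloor n/2\rfloor-1\}$ gives the upper bound, and the lower bound is the identity $D(d)=N_{C_n^\sharp}(d)\le\alpha(n)\,N_{C_n}(d)$ for every semimetric $d$; you apply Lemma~\ref{lem:averaged-path} directly where the paper cites Proposition~\ref{prop:metric-averaging}, which is the same computation.
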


\begin{proof}
Let $d$ be any metric on $\Z_n$. Averaging the generator $1$ makes the random step uniform over $\Z_n$ so $C_n^\sharp=\Cay(\Z_n,\Z_n)$ in normalized random-walk form where the zero step is just a self loop. Since $d(x,x)=0$ we have that
\[
  N_{C_n^\sharp}(d)
  =\frac1{n^2}\sum_{x \neq y}d(x,y) ~=~ D(d).
\]
By Proposition \ref{prop:metric-averaging} we have $D(d) \leq \alpha(n) \cdot N_{C_n}(d)$ and so every metric satisfies
\begin{equation}\label{eq:cycle-metric-lower}
  \frac{N_{C_n}(d)}{D(d)}
  ~\geq~\frac{1}{\alpha(n)}.
\end{equation}

Now let $m=\lfloor n/2\rfloor$ and take the interval cut $A=\{0,1,\ldots,m-1\}$. Exactly two undirected cycle edges cross the cut, equivalently four of the $2n$ directed transitions cross, so $N_{C_n}(A)=\nicefrac{2}{n}$. Its denominator is $D(A) = 2 \cdot \frac{m}{n} \left(1-\frac{m}{n}\right) = \frac{2m(n-m)}{n^2}$.
Therefore
\[
  \psi_{C_n}(A)=\frac{n}{m(n-m)}
  =
  \begin{cases}
    4/n,&n\text{ even},\\[3pt]
    4n/(n^2-1),&n\text{ odd}.
  \end{cases}
\]
Using the closed form for $\alpha$ the lower bound in \eqref{eq:cycle-metric-lower} is exactly the same quantity. Hence the metric relaxation, the GL relaxation, and the cut optimum all coincide.
\end{proof}

\begin{remark}
\label{remark:gaps}
For cycles $\rho(\{\nicefrac{-1}{+1}\})=\alpha(n)=\Theta(n)$ and $\frac{\psi(C_n)}{\gl(C_n)}=1$. Thus $\rho(S)$ can overestimate the true GL gap by a linear factor. At the same time $\lambda_2(C_n)=1-\cos(2\pi/n)=\Theta(n^{-2})$ but $\gl(C_n)=\Theta(n^{-1})$ from which we get that
\[
  \frac{\gl(C_n)}{\lambda_2(C_n)}=\Theta(n).
\]
\end{remark}

\subsection{Discrete tori}\label{subsec:tori}

Fix $r\ge5$ and $k\ge1$, and consider the Cartesian power of a cycle $C_r^{\square k}
  =\Cay((\Z_r)^k,\{\pm e_1,\ldots,\pm e_k\})$ which we denote by $T_{r,k}$. 
This graph has degree $2k$ and each generator has order $r$. 

\begin{proposition}
\label{prop:torus}
Let $a=\lfloor r/2\rfloor$. Then
\begin{equation}\label{eq:torus-lambda}
  \lambda_2(T_{r,k}) ~=~ \frac{1-\cos(2\pi/r)}{k}
\end{equation}
and
\begin{equation}\label{eq:torus-cut}
  \psi(T_{r,k}) ~\leq~ \frac{r}{k \cdot a(r-a)}.
\end{equation}
As a result
\begin{equation}\label{eq:torus-gap}
  \frac{\psi(T_{r,k})}{\gl(T_{r,k})}
  ~\leq~
  \frac{r}{a(r-a)(1-\cos(2\pi/r))},
\end{equation}
a constant depending on $r$ but not on $k$. 
\end{proposition}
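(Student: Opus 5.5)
We prove the three claims in order: the eigenvalue formula, then the cut bound, then the gap bound, which follows from the first two.

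\emph{Eigenvalue.} By Proposition \ref{prop:character-diagonalization} and \eqref{eq:lambda2-character-min}, it suffices to minimize $\lambda_{\chi_a}(T_{r,k})$ over nonzero $a\in\Z_r^k$. The generator multiset is $\{\pm e_j\}$, each element with probability $1/(2k)$, and $\chi_a(\pm e_j)=e^{\pm 2\pi i a_j/r}$. Hence
\[
  \lambda_{\chi_a}(T_{r,k})=\frac1k\sum_{j=1}^k\bigl(1-\cos(2\pi a_j/r)\bigr).
\]
Every term is nonnegative, and a term is positive exactly when $a_j\neq0$. When $a_j\neq 0$ we have $1-\cos(2\pi a_j/r)\ge 1-\cos(2\pi/r)$, because $\cos(2\pi a/r)$ over nonzero residues is largest at $a=\pm1$. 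The minimum is therefore attained by $a=e_1$ and equals $(1-\cos(2\pi/r))/k$, which is \eqref{eq:torus-lambda}.

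\emph{Cut.} We take the coordinate slab $A=\{x: x_1\in\{0,\ldots,a-1\}\}$, the pullback of the interval cut of $C_r$ under the character $\chi_{e_1}$. We could invoke Lemma \ref{lem:pullback-identity}, but a direct computation is just as short. Steps $\pm e_j$ with $j\ne1$ never cross $A$. Steps $\pm e_1$ occur with probability $1/k$, and conditioned on such a step the walk behaves exactly like $C_r$ on the first coordinate. So $N(A)=\frac1k\cdot\frac{2}{r}$, using the crossing count for the interval cut from the proof of Proposition \ref{prop:cycle-exact}. The density is $\mu(A)=a/r$, which gives $D(A)=2a(r-a)/r^2$. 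The ratio is $\frac{r}{k\,a(r-a)}$, so \eqref{eq:torus-cut} holds.

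\emph{Gap.} We combine the two via the chain \eqref{eq:fundamental-chain}: $\gl(T_{r,k})\ge\lambda_2(T_{r,k})$, so
\[
  \frac{\psi(T_{r,k})}{\gl(T_{r,k})}\le \frac{\psi(T_{r,k})}{\lambda_2(T_{r,k})}\le\frac{r/(k\,a(r-a))}{(1-\cos(2\pi/r))/k}.
\]
The factors of $k$ cancel, leaving \eqref{eq:torus-gap}.

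No step is a real obstacle. The only point that needs care is the minimization over $a$: the sum has $k$ nonnegative terms, so the minimizing character should be supported on a single coordinate, and that coordinate should equal $\pm1$. Both facts follow from the termwise lower bound above.
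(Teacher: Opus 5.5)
Your proposal is correct and follows the paper's proof essentially step for step. It uses the same character eigenvalue formula minimized termwise over nonzero indices, the same coordinate slab cut with crossing probability $\frac1k\cdot\frac2r$ and density $a/r$, and the same division by $\lambda_2\le\gl$. One cosmetic point: you use $a$ both for the character index in $\Z_r^k$ and for $\lfloor r/2\rfloor$, whereas the paper writes $\chi_t$ for the characters to avoid this clash.
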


\begin{proof}
For $t=(t_1,\ldots,t_k)\in(\Z_r)^k$, the character $\chi_t$ has Laplacian eigenvalue equal to $\lambda_t = 1-\frac{1}{k}\sum_{j=1}^k\cos(2\pi t_j/r)$.
For a nonzero $t$, at least one coordinate is nonzero, and the largest cosine at a nonzero residue is $\cos(2\pi/r)$. We get the minimum eigenvalue by taking one coordinate equal to plus/minus $1$ and all others zero, giving \eqref{eq:torus-lambda}.

Take the coordinate interval cut $A=\{x:x_1\in\{0,1,\ldots,a-1\}\}$. 
Only the two directions $+/- e_1$ can cross. Conditioning on choosing one of those directions, exactly two of the $r$ possible values of $x_1$ can cross the interval boundary so $N_{T_{r,k}}(A)=\frac1k\cdot\frac2r=\frac{2}{kr}$. The density is $a/r$ and so $D(A)=2\frac{a}{r} \left(1-\frac{a}{r} \right)=\frac{2a(r-a)}{r^2}$. Their ratio is \eqref{eq:torus-cut}. Finally, $\gl \geq  \lambda_2$ gives \eqref{eq:torus-gap}.

For $r=5$, $a=2$ and $1-\cos(2\pi/5)=(5-\sqrt5)/4$ which gives $\nicefrac{5}{6(1-\cos(2\pi/5))}=\nicefrac{(5+\sqrt5)}{6}$.
\end{proof}


\subsection{An  $\nicefrac{16}{15}$ integrality gap family}\label{subsec:integrality}
We next give an explicit connected Abelian Cayley graph on which the Goemans–Linial relaxation is not exact, and then extend it to an infinite family. To the best of our knowledge, no previously published explicit finite Abelian Cayley graph with uniform demands has been shown to have Goemans–Linial gap strictly larger than one. The seed graph has ten vertices and gap $16/15$.

We begin with the initial seed graph. Let $C=\operatorname{Cay}\bigl(\mathbb Z_{10},\{\pm1,\pm3\}\bigr)$. Combinatorially, $C$ is $K_{5,5}$ with a perfect matching removed. Then we will show that 
$\lambda_2(C)=\operatorname{SDP_{GL}}(C)=\nicefrac{3}{4}$ and $\psi(C)=\nicefrac{4}{5}$ and so for this small graph we have 
\[
\frac{\psi(C)}{\operatorname{SDP_{GL}}(C)}=\frac{16}{15}.
\]

Indeed for $j\in\mathbb Z_{10}$  let $\chi_j(t)=e^{2\pi ijt/10}$. Proposition \ref{prop:character-diagonalization} about character diagonalization gives $1-\frac12\left( \cos\frac{\pi j}{5} + \cos\frac{3\pi j}{5} \right)$. The smallest nonzero eigenvalue is attained at $j\in \{1,3,7,9\}$, and equals to $\lambda_2(C)=\nicefrac{3}{4}$.

To obtain a matching GL solution we define the translation-invariant semimetric $d_0 = \nicefrac{1}{4}d_{\chi_1} + \nicefrac{1}{4}d_{\chi_3}$ given by the formula
\[
d_0(x,y) ~=~ \frac{1}{2}\left(1-\cos\frac{\pi(y-x)}5\right)
+
\frac{1}{2}\left(1-\cos\frac{3\pi(y-x)}5\right).
\]
This is squared Euclidean distance because it is a nonnegative combination of squared character distances. Its values depend only on $t=y-x$ and are 
\[
d_0(t)=
\begin{cases}
0, & t=0 \\
\nicefrac{3}{4}, & t\in \{ -3,-1,+1,+3 \} \\
\nicefrac{5}{4}, & t\in \{-4,-2,+2,+4\} \\
2, & t=5.
\end{cases}
\]
We first verify the triangle inequalities. If the longest side has length at most $5/4$ then (unless one of the other sides is zero) their sum is at least $\frac34+\frac34>\frac54$. We now  consider a triangle the longest side of which has length $2$, which corresponds to two points whose difference is $5$. In any decomposition $5=u+v$ with $u,v\neq 0$ one term of the sum must be  odd while the other must be even number and none can be $0$ or $5$ and therefore $d_0(u)+d_0(v) = \frac34+\frac54=2 = d_0(5)$. Thus $d_0$ is feasible for the GL relaxation.


Now, each generator belongs to $S = \{ +/-1, +/-3 \}$ and so $\mathbb E_{s\in S}d_0(s) = \nicefrac{3}{4}$ which, together with the observation that $D(d_0) =1$, gives that $\operatorname{SDP_{GL}}(C)\leq \nicefrac{3}{4}$. Together with the general spectral lower bound $\lambda_2(C)\leq \gl(C)$ proves to us that $\operatorname{SDP_{GL}}(C)=\lambda_2(C)=\nicefrac{3}{4}$.

It remains to calculate the  optimum cut. Let $A \subseteq V(C)$ and let $a= |A| \leq 5$ and denote by $c(A)$ to be the associated edge boundary. Since $C$ is four-regular and bipartite $c(A)=4a - 2e(A) \geq 4a - 2 \lfloor \frac{a^2}{4} \rfloor$ which gives $\psi(A)_C = \nicefrac{5c(A)}{2a(10 - a)}$. We get the minimum for $a=5$ for which we get $\psi(C) = \nicefrac{4}{5}$ i.e., 
\[
\lambda_2(C) = \gl(C) = \frac{3}{4} < \frac{4}{5} = \psi(C)
\]
giving a gap of $\nicefrac{16}{15}$. Note that equality is achieved by $A = \{0,1,2,3,4\}$.

The interesting thing is that this exact construction is also present under Cartesian products. We note that the Cartesian product of two Cayley graphs is also Cayley. 

Indeed let us define the $k$ factor Cartesian product of $C$ with itself as $G_k$ i.e., $G_k = C^{\square k} $. We amplify this seed by Cartesian powers, following the product-gap framework of Bonsma \cite{bonsmatensor} and Sachdeva and Tulsiani \cite{DBLP:journals/corr/abs-1105-3383}. The normalized product Laplacian is just the average of coordinate Laplacians giving $\lambda_2(G_k) = \nicefrac{\lambda_2(C)}{k} = \nicefrac{3}{4k}$. The metric defined by
\[
d_k (x,y) = \frac{1}{k} \sum_{i=1}^k d_0(x_i - y_i)
\]
is feasible for GL  with objective function value equal to $\nicefrac{3}{4k}$ i.e., $\gl(G_k) = \nicefrac{3}{4k}$. Notice that as before we have $D(d_k)=1 $ and $N_{G^k} (d_k) = \nicefrac{3}{4k}$.
The required tensorization of uniform sparsest cut is a normalized special case of a more general Cartesian-product result of Bonsma's Theorem 5 \cite{bonsmatensor}, see also \cite{DBLP:journals/corr/abs-1105-3383} Theorem A.1. 

\begin{lemma}
\label{lem:cartesian-tensorization}
Let $H$ be a finite regular multigraph, and equip its $k$-fold
Cartesian power $H^{\square k}$ with the normalized random walk that
first chooses a coordinate $i\in[k]$ uniformly and then performs one
normalized $H$-step in that coordinate. Then $\psi(H^{\square k})=\frac{\psi(H)}{k}$.
\end{lemma}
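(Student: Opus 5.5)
The plan is to prove the two inequalities $\psi(H^{\square k})\le\psi(H)/k$ and $\psi(H^{\square k})\ge\psi(H)/k$ separately. The upper bound comes from lifting a cut of $H$. The lower bound comes from a hybrid (coordinate-by-coordinate) path argument, which only uses the triangle inequality for cut metrics and the definition of $\psi(H)$ applied fiber by fiber. Throughout, $N$ and $D$ are as in \eqref{eq:cut-numerator} and \eqref{eq:cut-denominator}, with the uniform vertex measure on $V(H)^k$ and the product walk described in the statement.

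\textbf{Upper bound.} Let $A\subsetneq V(H)$ be an optimal cut of $H$ and set $B=A\times V(H)^{k-1}$. A product step in coordinate $i\ge2$ never changes $\one_B$. A step in coordinate $1$ crosses $B$ with exactly the probability $N_H(A)$, because the first coordinate of a uniform vertex is uniform on $V(H)$. Hence $N_{H^{\square k}}(B)=N_H(A)/k$. Since $\mu(B)=\mu(A)$, we also get $D(B)=D(A)$. Together these give $\psi_{H^{\square k}}(B)=\psi(H)/k$.

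\textbf{Lower bound.} Fix any nontrivial $B\subseteq V(H)^k$. Draw independent uniform $x,y\in V(H)^k$ and form the hybrid sequence $z_0=x,\dots,z_k=y$, where $z_i=(y_1,\dots,y_i,x_{i+1},\dots,x_k)$. Since $d_B$ is a semimetric, the triangle inequality gives
\[
  d_B(x,y)\le\sum_{i=1}^k d_B(z_{i-1},z_i).
\]
For each $i$, the pair $(z_{i-1},z_i)$ has the following distribution. The coordinates outside position $i$ are a common uniform vector $w$. The $i$-th coordinates are two independent uniform points of $V(H)$. Now fix $i$ and $w$, and let $B_{i,w}\subseteq V(H)$ be the trace of $B$ on the line through $w$ in direction $i$. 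Then
\[
  \E\, d_B(z_{i-1},z_i)=\E_w D_H(B_{i,w}).
\]
Whenever $B_{i,w}$ is a nontrivial cut of $H$, the definition of $\psi(H)$ gives $D_H(B_{i,w})\le N_H(B_{i,w})/\psi(H)$. If $B_{i,w}$ is trivial, both sides vanish, so the bound still holds. Summing over $i$ yields
\[
  D(B)\le\frac1{\psi(H)}\sum_{i=1}^k\E_w N_H(B_{i,w}).
\]

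The remaining step is to identify the right-hand sum with the product numerator. This is the step that needs the most care, since it depends on how the product walk is normalized. By definition, the walk picks $i$ uniformly and then takes an $H$-step in coordinate $i$ from a uniform starting vertex. The starting vertex decomposes as a uniform $w$ together with a uniform $i$-th coordinate. Therefore $N_{H^{\square k}}(B)=\frac1k\sum_i\E_w N_H(B_{i,w})$. Substituting gives $D(B)\le k\,N_{H^{\square k}}(B)/\psi(H)$, that is, $\psi_{H^{\square k}}(B)\ge\psi(H)/k$. Taking the minimum over $B$ and combining with the upper bound proves the lemma.
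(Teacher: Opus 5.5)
Your proof is correct, and it differs from the paper mainly in being self-contained. The paper never proves this lemma. It invokes Bonsma's Theorem 5 (and Sachdeva--Tulsiani, Theorem A.1), a more general Cartesian-product result, and remarks only that the normalized statement is a special case of it.

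Your argument has two halves:
\begin{itemize}
\item the upper bound, by lifting an optimal cut of $H$ along one coordinate;
\item the lower bound, by the hybrid path $z_0=x,\dots,z_k=y$, the triangle inequality for $d_B$, and the definition of $\psi(H)$ applied on each one-dimensional fiber.
\end{itemize}
The lower bound closes correctly because the fiber decomposition of the product walk gives exactly $N_{H^{\square k}}(B)=\frac1k\sum_i\E_w N_H(B_{i,w})$.

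The citation buys generality, namely products of different factors, but leaves the normalization translation to the reader. Your argument is matched to exactly the random-walk normalization used in the paper. It also buys more than the lemma. The lower bound uses only the triangle inequality and the fact that restricting a metric to a fiber keeps it in the same feasible class. So the same hybrid argument with $d_B$ replaced by any GL-feasible $d$ gives $\gl(H^{\square k})\ge\gl(H)/k$. Lifting an optimal GL metric of $H$ along one coordinate gives the reverse inequality. Hence the ratio $\psi/\gl$ is preserved exactly under Cartesian powers for every seed graph, not only for seeds where $\gl=\lambda_2$, which is the fact the paper relies on for its $16/15$ family.

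One small slip: you divide by $\psi(H)$, which is $0$ when $H$ is disconnected. Write the fiber inequality as $\psi(H)\,D_H(B_{i,w})\le N_H(B_{i,w})$, or note that the case $\psi(H)=0$ already follows from your upper bound.
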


The product identity itself is therefore known but what matters here to us is that the seed graph above has a nontrivial GL gap and that the tensorized feasible GL metric scales by the same $1/k$ factor.
By Lemma~\ref{lem:cartesian-tensorization}, the optimum cut value
tensorizes with exactly the same normalization: $\psi(G_k) = \frac{\psi(C)}{k} = \frac{4}{5k}$ giving $\frac{\psi(G_k)}{\operatorname{SDP}_{GL}(G_k)} = \frac{16}{15}$
Thus the GL relaxation is not exact on Abelian Cayley graphs, even
for a connected family of growing order. In this family the SDP value
coincides with the spectral lower bound, while every cut is separated
from it by the constant factor $16/15$.

Combining Theorem \ref{thm:general-bound} with the Trevisan-Oveis Gharan estimate gives the following statement for connected $G$
\begin{equation}\label{eq:combined-picture}
  \frac{\psi(G)}{\gl(G)}
  ~\leq~ \min \Big\{2\rho(S), \mathcal{O} (\sqrt{|S|})\Big\}.
\end{equation}
The first term  is effective for higher degree graphs generated by smaller order elements and vice versa for the second term. Cycles and prime-exponent vector spaces are the two extremals of this comparison.

\section{Discussion and open problems}\label{sec:discussion}
Trevisan asked whether Abelian Cayley graphs might admit a constant factor approximation ratio, and specifically suggested that the GL relaxation itself might have a constant gap \cite{Trevisan21}. We provide some evidence but we do not resolve the question:
\begin{enumerate}
  \item a lower order character produces a valid GL metric and we can  round it without loss in its cyclic quotient, and
  \item an arbitrary character simplifies  after cyclic averaging and we can  round it to its kernel.
\end{enumerate}
Our framework is exact in several families although it does not determine the worst possible GL gap on Abelian Cayley graphs.


\begin{question}
Can $\psi(G)/\gl(G)$ grow as a function of $\rho(S)$ on connected Abelian Cayley graphs i.e., is the bound $2\rho(S)$ ever close to the true gap, or is it inherently loose because averaging, the spectral lower bound, and kernel rounding cannot be tight simultaneously?
\end{question}

\begin{question}
Is it possible to analyze $\roundval(\chi^*)$ directly instead of upper-bounding it by the kernel cut in order to obtain a general factor smaller than $q^*/(q^*-1)$ or a bound that depends on the quotient generator distribution?
\end{question}


\section{Usage of GenAI in the manuscript}
The project was conceived and developed by the author with the following exceptions for which, for the sake of transparency, the author acknowledges the usage of GenAI (ChatGPT \emph{Plus} 5.5 \& 5.6): firstly, GenAI was used for some editorial assistance throughout the manuscript and was used more extensively to help organize, reformulate, and present the mathematical arguments and ideas already developed by the author in the final write-up of Subsections 5.1 and 5.2. More substantial mathematical assistance was received in the following: Remark \ref{remark:gaps} was formulated and organized by GenAI.  Secondly,  and regarding the integrality gap example in Subsection \ref{subsec:integrality}, although the seed graph and its analysis are the author’s, GenAI  suggested using Cartesian products to extend the example to an infinite family and identified Bonsma's work \cite{\cite{bonsmatensor}} as relevant to this extension . Finally, this directly inspired also the discussion of Tori in subsection \ref{subsec:tori}. The author independently checked all statements, calculations, and references where GenAI was used and takes full responsibility for its contents.

\bibliographystyle{plain}
\bibliography{cayley_arv} 

@article{ARV09,
  author  = {Sanjeev Arora and Satish Rao and Umesh V. Vazirani},
  title   = {Expander Flows, Geometric Embeddings and Graph Partitioning},
  journal = {Journal of the ACM},
  volume  = {56},
  number  = {2},
  pages   = {5:1--5:37},
  year    = {2009},
  doi     = {10.1145/1502793.1502794}
}

@inproceedings{CNR25,
  author    = {Alan Chang and Assaf Naor and Kevin Ren},
  title     = {Optimal Rounding for Sparsest Cut},
  booktitle = {Proceedings of the 57th Annual ACM Symposium on Theory of Computing (STOC 2025)},
  pages     = {643--652},
  publisher = {Association for Computing Machinery},
  year      = {2025},
  doi       = {10.1145/3717823.3718285}
}

@inproceedings{DBLP:conf/stoc/KwokLLGT13,
  author       = {Tsz Chiu Kwok and
                  Lap Chi Lau and
                  Yin Tat Lee and
                  Shayan Oveis Gharan and
                  Luca Trevisan},
  editor       = {Dan Boneh and
                  Tim Roughgarden and
                  Joan Feigenbaum},
  title        = {Improved Cheeger's inequality: analysis of spectral partitioning algorithms
                  through higher order spectral gap},
  booktitle    = {Symposium on Theory of Computing Conference, STOC'13, Palo Alto, CA,
                  USA, June 1-4, 2013},
  pages        = {11--20},
  publisher    = {{ACM}},
  year         = {2013},
  url          = {https://doi.org/10.1145/2488608.2488611},
  doi          = {10.1145/2488608.2488611},
  bibsource    = {dblp computer science bibliography, https://dblp.org}
}

@article{DG62,
  author  = {Ludwig Danzer and Branko Gr{\"u}nbaum},
  title   = {{\"U}ber zwei Probleme bez{\"u}glich konvexer {K}{\"o}rper von {P.} {E}rd{\H{o}}s und von {V.} {L.} {K}lee},
  journal = {Mathematische Zeitschrift},
  volume  = {79},
  pages   = {95--99},
  year    = {1962},
  doi     = {10.1007/BF01193107}
}

@book{DL97,
  author    = {Michel M. Deza and Monique Laurent},
  title     = {Geometry of Cuts and Metrics},
  series    = {Algorithms and Combinatorics},
  volume    = {15},
  publisher = {Springer},
  address   = {Berlin},
  year      = {1997},
  doi       = {10.1007/978-3-642-04295-9},
  isbn      = {978-3-540-61611-5}
}

@article{bonsmatensor,
author = {Paul Bonsma},
title = {Sparsest cuts and concurrent flows in product graphs},
journal = {Discrete Applied Mathematics},
volume = {136},
number = {2-3},
pages = {173-182},
year = {2004},
issn = {0166-218X},
doi = {https://doi.org/10.1016/S0166-218X(03)00439-6},
url = {https://www.sciencedirect.com/science/article/pii/S0166218X03004396},
author = {Paul Bonsma}
}

@inproceedings{dorsiaetall25,
  author    = {Tommaso d'Orsi and Chris Jones and Jake Ruotolo and Salil Vadhan and Jiyu Zhang},
  title     = {Sparsest Cut and Eigenvalue Multiplicities on Low Degree Abelian Cayley Graphs},
  booktitle = {Approximation, Randomization, and Combinatorial Optimization. Algorithms and Techniques (APPROX/RANDOM 2025)},
  series    = {Leibniz International Proceedings in Informatics},
  volume    = {353},
  pages     = {16:1--16:20},
  publisher = {Schloss Dagstuhl--Leibniz-Zentrum f{\"u}r Informatik},
  year      = {2025},
  doi       = {10.4230/LIPIcs.APPROX/RANDOM.2025.16},
  note      = {Full version: arXiv:2412.17115}
}

@inproceedings{KM13,
  author    = {Daniel M. Kane and Raghu Meka},
  title     = {A {PRG} for Lipschitz Functions of Polynomials with Applications to Sparsest Cut},
  booktitle = {Proceedings of the 45th Annual ACM Symposium on Theory of Computing (STOC 2013)},
  pages     = {1--10},
  publisher = {Association for Computing Machinery},
  year      = {2013},
  doi       = {10.1145/2488608.2488610}
}

@article{LR99,
  author  = {Tom Leighton and Satish Rao},
  title   = {Multicommodity Max-Flow Min-Cut Theorems and Their Use in Designing Approximation Algorithms},
  journal = {Journal of the ACM},
  volume  = {46},
  number  = {6},
  pages   = {787--832},
  year    = {1999},
  doi     = {10.1145/331524.331526}
}

@article{LLR95,
  author  = {Nathan Linial and Eran London and Yuri Rabinovich},
  title   = {The Geometry of Graphs and Some of Its Algorithmic Applications},
  journal = {Combinatorica},
  volume  = {15},
  number  = {2},
  pages   = {215--245},
  year    = {1995},
  doi     = {10.1007/BF01200757}
}

@inproceedings{NY17,
  author    = {Assaf Naor and Robert Young},
  title     = {The Integrality Gap of the {Goemans--Linial} {SDP} Relaxation for Sparsest Cut is at Least a Constant Multiple of {$\sqrt{\log n}$}},
  booktitle = {Proceedings of the 49th Annual ACM Symposium on Theory of Computing (STOC 2017)},
  pages     = {564--575},
  publisher = {Association for Computing Machinery},
  year      = {2017},
  doi       = {10.1145/3055399.3055413}
}

@article{Schoenberg38,
  author  = {I. J. Schoenberg},
  title   = {Metric Spaces and Positive Definite Functions},
  journal = {Transactions of the American Mathematical Society},
  volume  = {44},
  number  = {3},
  pages   = {522--536},
  year    = {1938},
  doi     = {10.1090/S0002-9947-1938-1501980-0}
}

@book{Terras99,
  author    = {Audrey Terras},
  title     = {Fourier Analysis on Finite Groups and Applications},
  series    = {London Mathematical Society Student Texts},
  volume    = {43},
  publisher = {Cambridge University Press},
  address   = {Cambridge},
  year      = {1999},
  isbn      = {978-0-521-45718-7}
}

@misc{Trevisan21,
  author       = {Shayan Oveis Gharan and Luca Trevisan},
  title        = {{ARV} on Abelian Cayley Graphs},
  howpublished = {\emph{In theory} blog},
  date         = {08 October 2021},
  year         = {2021},
  note          = {https://lucatrevisan.wordpress.com/2021/10/08/arv-on-abelian-cayley-graphs/}
}

@book{SteinbergGroups,
 author = {Benjamin Steinberg}, 
 title = {Representation Theory of Finite Groups - An Introductory Approach}, 
 publisher = {Springer New York, NY},
 series = {Universitext},
 date =  {23 October 2011},
 year = {2011},
 isbn = {978-1-4614-0776-8}
 }

@article{DBLP:journals/dcg/AustinNV10,
  author       = {Tim Austin and
                  Assaf Naor and
                  Alain Valette},
  title        = {The Euclidean Distortion of the Lamplighter Group},
  journal      = {Discret. Comput. Geom.},
  volume       = {44},
  number       = {1},
  pages        = {55--74},
  year         = {2010},
  url          = {https://doi.org/10.1007/s00454-009-9162-6},
  doi          = {10.1007/S00454-009-9162-6},
  bibsource    = {dblp computer science bibliography, https://dblp.org}
}

@article{DBLP:journals/toc/NewmanR09,
  author       = {Ilan Newman and
                  Yuri Rabinovich},
  title        = {Hard Metrics from Cayley Graphs of Abelian Groups},
  journal      = {Theory Comput.},
  volume       = {5},
  number       = {1},
  pages        = {125--134},
  year         = {2009},
  url          = {https://doi.org/10.4086/toc.2009.v005a006},
  doi          = {10.4086/TOC.2009.V005A006},
  bibsource    = {dblp computer science bibliography, https://dblp.org}
}

@article{DBLP:journals/corr/abs-1105-3383,
  author       = {Sushant Sachdeva and
                  Madhur Tulsiani},
  title        = {Cuts in Cartesian Products of Graphs},
  journal      = {CoRR},
  volume       = {abs/1105.3383},
  year         = {2011},
  url          = {http://arxiv.org/abs/1105.3383},
  eprinttype   = {arXiv},
  eprint       = {1105.3383},
  bibsource    = {dblp computer science bibliography, https://dblp.org}
}

@book{kantor2015mathematics,
  title     = {Mathematics++: Selected Topics Beyond the Basic Courses},
  author    = {Kantor, Ida and Matou{\v{s}}ek, Ji{\v{r}}{\'\i} and {\v{S}}{\'a}mal, Robert},
  year      = {2015},
  publisher = {American Mathematical Society},
  series    = {Student Mathematical Library},
  volume    = {75},
  address   = {Providence, Rhode Island},
  isbn      = {978-1-4704-2261-5}
}

@book{dummitfoote,
author = {Dummit, David S. and Foote, Richard M.},
year = {2004},
title = {Abstract algebra (3rd ed.)}, 
publisher = {Wiley, New York},
isbn = {ISBN 978-0-471-43334-7}
}

@book{analysistopologyhowes,
title ={Modern Analysis and Topology},
author = {Norman R. Howes},
series = {Universitext},
publisher = {Springer, New York},
year = {1995},
isbn = {978-0-387-97986-1}
}
\end{document}